\definecolor{shadecolor}{rgb}{1,1,0}
\newtheorem{theorem}{Theorem}[section]
\newtheorem{corollary}{Corollary}[section]
\newtheorem{lemma}{Lemma}[section]
\newtheorem{proposition}{Proposition}[section]
\theoremstyle{definition}
\newtheorem{definition}{Definition}[section]
\newtheorem{remark}{Remark}[section]
\title[Mean Topological Dimension]
      {Mean Topological Dimension for random bundle transformations}
\author[Junqi Yang, Xianfeng Ma and Ercai Chen]{}
 \email{y30140124@mail.ecust.edu.cn}
 \email{xianfengma@gmail.com}
 \email{ecchen@njnu.edu.cn}
\thanks{The second and third authors are  supported by  NNSF of China (11271191).
The  second author is supported by  NNSF of China (11471114).
The third author is partially supported by National
Basic Research Program of China (973 Program) (Grant No. 2013CB834100).
}
\begin{document}
\maketitle

%% Enter the first author's name and address:
\renewcommand{\thefootnote}{\fnsymbol{footnote}}
\centerline{
\scshape Junqi Yang%\footnote{Corresponding author}
}
\medskip
{\footnotesize
 %% please put the address of the first author
   \centerline{Department of Mathematics}
   \centerline{East China University of Science and Technology, Shanghai 200237, China}
%  \centerline{and}
%  \centerline{Department of Mathematics}
%  \centerline{The Pennsylvania State University, University Park, PA 16802, USA}
} %% Do not forget to end the {\footnotesize by the sign }

\medskip

\centerline{
\scshape Xianfeng Ma \footnote{Corresponding author}
}
\medskip
{\footnotesize
 %% please put the address of the first author
   \centerline{Department of Mathematics}
   \centerline{East China University of Science and Technology, Shanghai 200237, China}
%  \centerline{and}
%  \centerline{Department of Mathematics}
%  \centerline{The Pennsylvania State University, University Park, PA 16802, USA}
} %% Do not forget to end the {\footnotesize by the sign }

\medskip

\centerline{\scshape Ercai Chen}
\medskip
{\footnotesize
 %% please put the address of the second author
 \centerline{School of Mathematical Science}
   \centerline{Nanjing Normal University, Nanjing 210097, China}
   \centerline{and}
   \centerline{Center of Nonlinear Science}
   \centerline{Nanjing University, Nanjing 210093, China}
} %

\bigskip

%% The name of the associate editor will be entered by an editorial staff
% \centerline{(Communicated by the associate editor name)}

\begin{abstract}We introduce the mean topological dimension for random bundle transformations, and show that continuous bundle random dynamical systems with finite topological entropy, or the small boundary property have zero mean topological dimensions.
\end{abstract}

\section{Introduction}\label{sec1}
Topological entropy plays an important role in the theory of dynamical systems.
It measures the exponential growth rate of the number of distinguishable orbits of the iterates and represents the complexity of dynamical systems.
It was first introduced by Adler, Konheim and McAndrew \cite{AKM} as an invariant of topological conjugacy for studying dynamical systems in compact topological spaces.
An equivalent definition was introduced in metric spaces by Bowen \cite{Bowen1} and Dinaburg \cite{Dinaburg} independently.

Shub and Weiss \cite{shub1991can} developed the notion of small sets that plays  a crucial role in the study of small entropy factors of  topological dynamical systems.
Later, mean topological dimension, an analogue of Lebesgue covering dimension, was introduced by Gromov \cite{gromov1999topological} for studying dynamical properties of certain spaces of holomorphic maps and complex varieties.
 Lindenstrauss and Weiss \cite{lindenstrauss2000mean} presented the notion of the mean dimension of  dynamical systems and used it to answer in the negative an open question raised by Auslander \cite{auslander1988minimal} that whether every minimal system $(X,T)$ can be imbedded in  $[0,1]^{\mathbb{Z}}$.
They also defined the metric mean dimension which can be thought as a mean Minkowski dimension by using the cardinality of open covers by sets of small diameter instead of the degree of these covers.
It allowed them to establish the relationship between the mean dimension and the topological entropy of dynamical systems, which indicates that each system with finite topological entropy has zero mean dimension.
By replacing the empty set with the small set in the definition of zero dimensional space, they also developed the small boundary property (SBP), which could be seen as a dynamical version of being totally disconnected, and showed that a dynamical system with this property has mean dimension zero. The mean topological dimension can give some interesting information in deterministic dynamical systems especially when the usual invariant of topological entropy is infinite
\cite{Lin1999,lindenstrauss2000mean}.

Random dynamical systems (RDS) evolve by the composition of different maps instead of  iterations of one self-map.
 Kifer \cite{kifer2012ergodic} studied the  systems generated by random transformations chosen independently  with identical distribution and introduced the notions of topological entropy and topological pressure.
Bogensch{\"u}tz \cite{Bogen} gave the definition of the topological entropy for the  random transformations acting on one space.
A more general models of random transformations are formed by skew-product maps restricted to random invariant sets and act between different spaces as a class of bundle RDS.
 Kifer \cite{kifer2001topological} showed that in this situation the corresponding topological pressure can be obtained by almost sure limits and deduced the relativized variational principle.

In the present paper, we follow the work from \cite{lindenstrauss2000mean,kifer2001topological} and introduce the mean topological dimension for continuous bundle RDS, which enables us to assign a  quantity to the systems with infinite dimensional state space (for example, generated by infinite dimensional stochastic differential equations) or infinite topological entropy.
We also define the metric mean dimension and the SBP for bundle RDS, which include the deterministic case as the probability measure being supported on a single point.
We show  that  random dynamical systems with finite topological entropy or the SBP have zero mean topological dimensions.

The paper is organized as follows.
In Section \ref{sec2}, we recall some background in bundle RDS and prove the measurability of the function $ \mathcal{D}\big(\alpha^{n-1}_0\left( \omega \middle\vert \mathcal{E} \right)\big)$ in $\omega$.
In Section \ref{sec3}, we define the mean topological dimension and the metric mean dimension in bundle RDS, investigate the relationship between the two notions,  and show that the finite topological entropy implies the zero mean topological dimension.
In Section \ref{sec4}, we introduce the small boundary property for random bundle transformations and devote to proving that the SBP implies the zero mean topological dimension.

% SECTION PRELIMINARIES
\section{Preliminaries and the measurability of  $ \mathcal{D}\big(\alpha^{n-1}_0\left( \omega \middle\vert \mathcal{E} \right)\big)$}\label{sec2}

Let $(\Omega, \mathcal{F},\mathbb{P})$ be a complete probability space together with a $\mathbb{P}$-preserving transformation $\vartheta$ and $X$ be a compact metric space with the distance function $d$ and the Borel $\sigma$-algebra $\mathcal{B}$.
Let $\mathcal{E}$ be a measurable subset of $\Omega\times X$ with respect to the product $\sigma$-algebra $\mathcal{F}\otimes \mathcal{B}$ and the fibers
$\mathcal{E}_{\omega}=\{x\in X: (\omega,x)\in \mathcal{E}\},\omega\in\Omega$ are nonempty compact subsets of $X$.
It means (see \cite{castaing2006convex}) that the mapping $\omega\mapsto\mathcal{E}_\omega$ is measurable with respect to the Borel $\sigma$-algebra induced by the Hausdorff topology $\mathscr{T}_H$ on the space $\mathscr{P}_{k}$ of compact subsets of $X$.
A continuous bundle random dynamical systems (RDS) $T$ over $(\Omega, \mathcal{F},\mathbb{P},\vartheta)$ is generated by the mappings $T_{\omega}:\mathcal{E}_{\omega}\rightarrow \mathcal{E}_{\vartheta\omega}$
so that the map $(\omega,x)\mapsto T_{\omega}x$ is measurable and the map $x\mapsto T_{\omega}x$ is continuous for $\mathbb{P}$-almost all (a.a.) $\omega$.
The family $\{T_{\omega}:\omega\in \Omega\}$ is called a random transformation and each $T_{\omega}$ maps the fiber $\mathcal{E}_{\omega}$ to $\mathcal{E}_{\vartheta\omega}$.
The map $\Theta:\mathcal{E}\rightarrow \mathcal{E}$ defined by $\Theta(\omega,x)=(\vartheta\omega, T_{\omega}x)$ is called the skew product transformation. Observe that $\Theta^n(\omega, x)=(\vartheta^n\omega, T_{\omega}^nx)$, where $T_{\omega}^n=T_{\vartheta^{n-1}\omega}\circ\cdots T_{\vartheta\omega}\circ T_{\omega}$ for $n\geq 1$ and $T_{\omega}^0=id$.

For each finite open cover $\alpha=\{A^{(i)}:i=1,2,\dots,l\}$ of $X$, denote by $\alpha\left(\omega\middle\vert\mathcal{E}\right)$ the open cover of $\mathcal{E}_\omega$ by the sets $A^{(i)}_{\mathcal{E}}(\omega)=A^{(i)}\cap\mathcal{E}_\omega$ (some of which may be empty).
Clearly, $\vee_{i=0}^{n-1} (T_{\omega}^{i})^{-1} \alpha\left(\omega\middle\vert\mathcal{E}\right)$ denoted by $\alpha_{0}^{n-1}\left(\omega\middle\vert\mathcal{E}\right)$ is the open cover of $\mathcal{E}_\omega$ consisting of sets \[A^{(j_0,j_1,\dots,j_{n-1})}_{\mathcal{E}}(\omega)=\bigcap_{i=0}^{n-1} (T^i_\omega)^{-1}A^{(j_i)}_{\mathcal{E}}(\theta^i \omega)\](some of which may be empty).
By the measurability of $\Theta$, it is easy to prove (see \cite{kifer2001topological}) that for any $j=(j_0,j_1,\dots,j_{n-1})\in {\{1,2,\dots,l\}}^{n}$, the graph of map $\omega \mapsto A^{j}_{\mathcal{E}}(\omega)$, denoted by $A^j_{\mathcal{E}}=\{(\omega,x):x\in A^{j}_{\mathcal{E}}(\omega)\}$ is measurable.

Let $K\subset X$ be compact, $\alpha$ a finite open cover of $K$. We shall denote (see \cite{lindenstrauss2000mean})
\[
\text{ord}(\alpha)=-1+\sup_{x\in K} \sum_{U\in \alpha} 1_{U}(x) \text{ and } \mathcal{D}(\alpha)=\min_{\beta\succ\alpha} \text{ord}(\beta)
\]
where $\beta$ runs over all finite open covers of $K$ refining $\alpha$. Since $K$ is normal, it can be proved (see \cite{coornaert2015topological}) that in the definition of $\mathcal{D}(\alpha)$, one can make $\beta$ runs over all finite closed covers of $K$ that are finer than $\alpha$.

\begin{definition}
Two families $(E_i)_{i\in I}$ and $(F_i)_{i\in I}$, with common indexed set $I$, are called combinatorially equivalent if one has \[\bigcap_{i\in J} E_i \ne \emptyset \iff \bigcap_{i\in J} F_i \ne \emptyset\] for every subset $J\subset I$.
\end{definition}
\begin{remark}
If $\alpha$ and $\beta$ are families of subsets of a set $K$ that are combinatorially equivalent, then one has $\text{ord}(\alpha)=\text{ord}(\beta)$.
\end{remark}
We need the following lemma (see \cite{coornaert2015topological}) to prove the measurability of $\mathcal{D} \big( \alpha_0^{n-1} \left( \omega \middle\vert \mathcal{E} \right)\big)$.

\begin{lemma}\label{lemma1}
Let $(F_i)_{i\in I}$ be a finite family of closed subsets of $K$ and $(U_i)_{i\in I}$ a family of open subsets of $K$ such that $F_i \subset U_i$ for all $i\in I$. Then there exists a family $(V_i)_{i\in I}$ of open subsets of $K$ satisfying the following conditions:
\begin{enumerate}
\item[(i)]
one has $F_i \subset V_i \subset \overline{V_i} \subset U_i$ for all $i\in I$;
\item[(ii)]
the families $(F_i)_{i\in I}$, $(V_i)_{i\in I}$ and $(\overline{V_i})_{i\in I}$ are combinatorially equivalent.
\end{enumerate}
\end{lemma}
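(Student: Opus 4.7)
The plan is to construct the family $(V_i)_{i\in I}$ by induction, enumerating $I=\{1,\dots,n\}$ and choosing $V_1,V_2,\dots,V_n$ one at a time, each time using normality of the compact metric space $K$ to separate $F_i$ from a judiciously chosen closed set that encodes the relevant empty intersection patterns.

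First, I would reduce condition (ii) to a one-sided statement. Since the construction will force $F_j\subset V_j\subset \overline{V_j}$, the implications
\[
\textstyle\bigcap_{j\in J}F_j\ne\emptyset\ \Rightarrow\ \bigcap_{j\in J}V_j\ne\emptyset\ \Rightarrow\ \bigcap_{j\in J}\overline{V_j}\ne\emptyset
\]
hold automatically for every $J\subset I$. Thus combinatorial equivalence of the three families is equivalent to the reverse implication: whenever $\bigcap_{j\in J}F_j=\emptyset$, one needs $\bigcap_{j\in J}\overline{V_j}=\emptyset$.

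The core of the plan is the inductive invariant: after $V_1,\dots,V_{i-1}$ have been chosen, for every $J\subset I$ with $\bigcap_{j\in J}F_j=\emptyset$ the closed set
\[
M_J^{(i)}\;:=\;\bigcap_{j\in J,\,j<i}\overline{V_j}\ \cap\ \bigcap_{j\in J,\,j\geq i}F_j
\]
is empty. The base case $i=1$ is trivial since $M_J^{(1)}=\bigcap_{j\in J}F_j$. For the inductive step, I would form the finite union
\[
C_i\;:=\;\bigcup_{\substack{J\ni i\\ \bigcap_{j\in J}F_j=\emptyset}}\Bigl(\bigcap_{j\in J,\,j<i}\overline{V_j}\ \cap\ \bigcap_{j\in J,\,j>i}F_j\Bigr),
\]
which is closed, and observe that the inductive hypothesis applied to each such $J$ (noting $i\in J$) says exactly that the corresponding summand meets $F_i$ emptily, so $C_i\cap F_i=\emptyset$. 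Hence $F_i$ is a closed subset of the open set $U_i\setminus C_i$, and by normality of $K$ there exists an open $V_i$ with $F_i\subset V_i\subset\overline{V_i}\subset U_i\setminus C_i$. This choice preserves the invariant at step $i+1$: for $J\not\ni i$ nothing changes, while for $J\ni i$ with $\bigcap_{j\in J}F_j=\emptyset$, the intersection $M_J^{(i+1)}$ is the corresponding summand of $C_i$ intersected with $\overline{V_i}$, which is empty by construction. After $n$ steps, the invariant at level $i=n+1$ is precisely the reverse implication needed for combinatorial equivalence, and (i) is built into the construction.

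The main obstacle is choosing the right inductive invariant. The asymmetric formulation — closures $\overline{V_j}$ for indices already processed, originals $F_j$ for indices still to come — is what makes both the disjointness $C_i\cap F_i=\emptyset$ available at each step (using the previous invariant) and the final conclusion strong enough. Once this invariant is in place, everything else reduces to the standard use of normality in a compact Hausdorff space to shrink a neighborhood of a closed set away from a disjoint closed set while staying inside a prescribed open set.
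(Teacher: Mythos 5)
Your proof is correct: the reduction of combinatorial equivalence to the one-sided implication $\bigcap_{j\in J}F_j=\emptyset\Rightarrow\bigcap_{j\in J}\overline{V_j}=\emptyset$ is valid because $F_j\subset V_j\subset\overline{V_j}$, the inductive invariant $M_J^{(i)}=\emptyset$ holds at the base, the set $C_i$ is closed and disjoint from $F_i$ precisely by that invariant, normality of the compact set $K$ yields $F_i\subset V_i\subset\overline{V_i}\subset U_i\setminus C_i$, and the invariant propagates since for $J\ni i$ the set $M_J^{(i+1)}$ is contained in $\overline{V_i}\cap C_i=\emptyset$. The paper itself gives no proof of this lemma (it is quoted from Coornaert's book), and your induction — shrinking one set at a time while preserving the intersection pattern of the mixed family of closures and remaining closed sets — is essentially the standard argument given there, so you have supplied exactly the cited proof.
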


\begin{theorem}
For every $n\in \mathbb{N}$ and finite open cover $\alpha$ of $X$, $\omega\mapsto \mathcal{D}\big(\alpha^{n-1}_0\left( \omega \middle\vert \mathcal{E} \right)\big)$ is measurable.
\end{theorem}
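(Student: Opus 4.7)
The plan is to exploit the fact that the cover $\alpha_0^{n-1}(\omega|\mathcal{E})$ is indexed by the finite set $I := \{1,\dots,l\}^n$, so that $\mathcal{D}$ applied to it depends on $\omega$ through only finitely many bits of information, namely the \emph{nerve}
\[
\mathcal{N}(\omega) := \Bigl\{J\subseteq I : \bigcap_{j\in J} A^j_\mathcal{E}(\omega)\neq\emptyset\Bigr\}.
\]
Once I show that $\mathcal{D}(\alpha_0^{n-1}(\omega|\mathcal{E}))$ is a function of $\mathcal{N}(\omega)$ alone, there are only finitely many possible values; it then suffices to prove that for each fixed $\mathcal{N}_0\subseteq 2^I$ the set $\{\omega : \mathcal{N}(\omega)=\mathcal{N}_0\}$ lies in $\mathcal{F}$.

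For the measure-theoretic half I would argue as follows. Each event $E_J := \{\omega : \bigcap_{j\in J} A^j_\mathcal{E}(\omega)\neq\emptyset\}$ is the $\Omega$-projection of the measurable subset $\bigcap_{j\in J} A^j_\mathcal{E}\subseteq\Omega\times X$; the measurable projection theorem yields $E_J\in\mathcal{F}$, using that $\mathbb{P}$ is complete, that $X$ is a compact (hence Polish) metric space, and that each graph $A^j_\mathcal{E}$ is measurable as recalled in the text. The set $\{\omega : \mathcal{N}(\omega)=\mathcal{N}_0\}$ is then the finite Boolean combination $\bigcap_{J\in\mathcal{N}_0}E_J \,\cap\, \bigcap_{J\notin\mathcal{N}_0}E_J^c$, which lies in $\mathcal{F}$.

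The main obstacle is the combinatorial claim that $\mathcal{D}(\alpha_0^{n-1}(\omega|\mathcal{E}))$ depends only on $\mathcal{N}(\omega)$. The plan is to use Lemma \ref{lemma1} together with the remark that combinatorially equivalent families share the same order. First, I would reduce to closed shrinkings indexed by $I$: given a closed refinement $(F_s)_{s\in S}$ realising $\mathcal{D}$ (closed refinements suffice by the observation preceding the lemma), assign to each $s$ an index $j(s)\in I$ with $F_s\subseteq A^{j(s)}_\mathcal{E}(\omega)$ and take $F_j := \bigcup_{j(s)=j}F_s$; the resulting family $(F_j)_{j\in I}$ is a closed shrinking of the cover with $\text{ord}((F_j))\leq \text{ord}((F_s))$. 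Thus $\mathcal{D}$ equals the minimum order over closed shrinkings indexed by $I$. For any $\omega'$ with $\mathcal{N}(\omega')=\mathcal{N}(\omega)$, the remaining task is to build from $(F_j)$ a closed family $(F'_j)_{j\in I}$ inside $(A^j_\mathcal{E}(\omega'))_{j\in I}$ that is combinatorially equivalent to $(F_j)$ and still covers $\mathcal{E}_{\omega'}$; applying Lemma \ref{lemma1} stratum by stratum along the simplices of the common nerve should produce such a family, whose order equals that of $(F_j)$ by the remark. The delicate point is arranging the covering property, which is precisely where the full equality of nerves is used.
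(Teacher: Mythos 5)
Your measure-theoretic step is fine: the events $E_J=\pi_\Omega\bigl(\bigcap_{j\in J}A^j_{\mathcal{E}}\bigr)$ do lie in $\mathcal{F}$ by the measurable projection theorem (completeness of $\mathbb{P}$ and compactness of $X$ are exactly what is needed), so the nerve $\mathcal{N}(\omega)$ is a measurable function of $\omega$; also your reduction of $\mathcal{D}$ to closed shrinkings indexed by $I$ is correct. The genuine gap is the combinatorial claim on which everything rests: $\mathcal{D}\bigl(\alpha_0^{n-1}(\omega\vert\mathcal{E})\bigr)$ is \emph{not} a function of $\mathcal{N}(\omega)$ alone, because it depends on the topology of the fiber $\mathcal{E}_\omega$, which the nerve of an $l^n$-element cover does not see. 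Concretely, take $n=1$, $\Omega=\{\omega_1,\omega_2\}$ with $\vartheta=\mathrm{id}$, $T_\omega=\mathrm{id}$, $\mathcal{E}_{\omega_1}$ a circle in $X$ and $\mathcal{E}_{\omega_2}$ a three-point set, and let $\alpha=\{A^{(1)},A^{(2)},A^{(3)}\}$ be an open cover of $X$ whose traces on the circle are three arcs with nonempty pairwise and empty triple intersections, and whose traces on the three-point set are the three two-point subsets. Both fibers give the same nerve, but $\mathcal{D}\bigl(\alpha(\omega_1\vert\mathcal{E})\bigr)=1$ (by connectedness of the circle no refinement of order $0$ exists, while $\text{ord}=1$) and $\mathcal{D}\bigl(\alpha(\omega_2\vert\mathcal{E})\bigr)=0$ (refine by singletons). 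Hence the planned transfer step cannot be carried out: Lemma \ref{lemma1} operates inside a single space $K$ and can reproduce the combinatorics of a family there, but it cannot transport a refinement from $\mathcal{E}_\omega$ to a topologically different fiber $\mathcal{E}_{\omega'}$ while preserving both the covering property and the order; the order of an optimal refinement is governed by the dimension/connectivity of the fiber, not by the nerve of the original cover.

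This is precisely why the paper does not attempt an exact finite-combinatorial invariance. Instead it views $\omega\mapsto\bigl(\mathcal{E}_\omega,(\mathcal{E}_\omega\setminus A^j_{\mathcal{E}}(\omega))_{j\in J}\bigr)$ as a measurable map into $\mathscr{P}_k^{l^n+1}$ with the Hausdorff topology and proves a semicontinuity statement: the sets $\mathscr{Q}_q=\{0\le f\le q\}$ are open. There Lemma \ref{lemma1} is used in a different way than you propose: an optimal closed refinement of a \emph{given} pair $(K_0,(F_0^j))$ is thickened to open sets $V^i$ with the same combinatorics, and then any pair $(K,(F^j))$ close in the Hausdorff sense is still covered by the $\overline{V^i}$ with multiplicity at most $q+1$ and compatibly with the $F^j$; this yields an open neighborhood inside $\mathscr{Q}_q$, i.e.\ local stability of an \emph{upper bound} for $\mathcal{D}$, not invariance of its exact value under equality of nerves. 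To repair your argument you would need to replace the nerve by data rich enough to determine $\mathcal{D}$, and no finite combinatorial invariant of the cover does this; some continuity-type argument in the Hausdorff metric, as in the paper, appears unavoidable.
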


\begin{proof}
We can assume $\#\alpha=l$, $J={\{1,2,\dots,l\}}^{n}$. Define $f:\mathscr{P}_{k}^{l^n+1}\to \{-1\}\cup\mathbb{N}$ by \[f\big(K,(F^j)_{j\in J}\big)=-1+\min_{\beta} \sup_{x\in K} \sum_{B\in \beta} 1_{B}(x)\] where $\beta$ runs over all families of open subsets of $K$ such that \[\beta \succ (K\backslash{F^j})_{j \in J} \text{ and } K=\bigcup_{B\in \beta}B.\]
For $K=\emptyset$ or $\beta \in \emptyset$ we set $f=-1$. Obviously,
\[
\mathcal{D}\big(\alpha^{n-1}_0\left( \omega \middle\vert \mathcal{E} \right)\big)=f\big(\mathcal{E}_{\omega},(\mathcal{E}_{\omega}\backslash A^{j}_{\mathcal{E}}(\omega))_{j\in J}\big)\geq 0.
\]
Note that $\omega\mapsto\mathcal{E}_\omega$ is measurable and for every $j\in J$, $\{(\omega,x):x\in \mathcal{E} \backslash A^{j}_\mathcal{E}(\omega)\}=\mathcal{E}\backslash A^{j}_{\mathcal{E}}$, the graph of multifunction $\omega \mapsto \mathcal{E}_\omega \backslash A^{j}_{\mathcal{E}}(\omega)$ is measurable, so it suffices to prove that \[ \mathscr{Q}_q = \left\{ \big(K,(F^j)_{j\in J}\big):0\leq f(K,(F^j)_{j\in J}) \leq q \right\} \] is measurable with respect to the product $\sigma$-algebra $\otimes_{i=0}^{l^n} \sigma(\mathscr{T}_H)$ for any $q\in \mathbb{N}$, where $\mathscr{T}_H$ is the Hausdorff topology on $\mathscr{P}_k$ and $\sigma(\mathscr{T}_H)$ is the Borel $\sigma$-algebra generated by $\mathscr{T}_H$. In fact, $\mathscr{Q}_q$ is open in $\mathscr{P}_k^{l^n+1}$.

For any $(K_0,(F^j_0)_{j\in J})\in \mathscr{Q}_q$, there exists a family of finite closed subsets $(E^i)_{i\in I}$ of $K_0$ and a map $\phi : I \to J$ such that
\begin{equation*}
\begin{cases} K_0 = \bigcup_{i\in I} E^i, \\
E^i \subset K_0 \backslash F_0^{\phi(i)}, & \forall i\in I, \\
\sum_{i\in I} 1_{E^i}(x) \leq q+1, & \forall x\in K_0.
\end{cases}
\end{equation*}
Applying Lemma \ref{lemma1} on $E^i \subset K_0 \backslash F_0^{\phi(i)}$, $i\in I$, there exists a family of open subsets $(V^i)_{i\in I}$ of $K_0$ such that
\[ E^i \subset V^i \subset \overline{V^i} \subset K_0 \backslash F_0^{\phi(i)}, \forall i\in I\]
and $(E^i)_{i\in I}$, $(V^i)_{i\in I}$ and $(\overline{V^i})_{i\in I}$ are combinatorially equivalent. We have
\[ \sup_{x\in K_0} \sum_{i\in I} 1_{\overline{V^i}}(x) = \sup_{x\in K_0} \sum_{i\in I} 1_{V^i}(x) = \sup_{x\in K_0} \sum_{i\in I} 1_{E^i}(x) \leq q+1. \]

Set
\[ \mathscr{K}_1 = \left\{ K\in \mathscr{P}_k : K \subset \bigcup_{i\in I} V^i \right\} \]
\[ \mathscr{K}_2 = \bigcap_{\substack{\Gamma\subset I \\ \#\Gamma > q+1}} \left\{ K\in \mathscr{P}_k : K \cap \bigcap_{i\in \Gamma} \overline{V^i} = \emptyset \right\} \]
and for $j\in J$ set
\[ \mathscr{F}_j = \left\{ F\in \mathscr{P}_k : F\subset \bigcap_{i\in\phi^{-1}\{j\}} (X\backslash \overline{V^i}) \right\}. \]
Note that $\cup_{i\in I} V^i$ and $X\backslash \overline{V^i}$ are open in $X$ and $\cap_{i\in \Gamma} \overline{V^i}$ is closed in $X$, so (see \cite{kuratowski1948topologie}) $\mathscr{K}_1$, $\mathscr{K}_2$ and $\mathscr{F}_j$ are open subsets of $\mathscr{P}_k$. We have \[ \mathscr{O} = (\mathscr{K}_1 \cap \mathscr{K}_2) \times \prod_{j\in J} \mathscr{F}_j\] is open in $\mathscr{P}_k^{l^n+1}$ such that $(K_0,(F^j_0)_{j\in J})\in \mathscr{O} \subset\mathscr{Q}_q$. Indeed, for any $(K,(F^j)_{j\in J})\in \mathscr{O}$, by the construction of $\mathscr{K}_1$ and $\mathscr{F}_j$, $j\in J$, $(\overline{V^i})_{i\in I}$ is always a closed cover of $K$ that are finer than $(K\backslash F^j)_{j\in J}$ and satisfy \[ \sup_{x\in K} \sum_{i\in I} 1_{\overline{V^i}}(x) \leq q+1\] by the construction of $\mathscr{K}_2$. Since  $(K_0,(F^j_0)_{j\in J})$ is arbitrary, $\mathscr{Q}_q$ is open in $\mathscr{P}_k$.

To prove that the open set $\mathscr{Q}_q$ is measurable, we simply observe that $(X,d)$ is a compact metric space, so (see \cite{kuratowski1961topologie,michael1951topologies})
$\mathscr{P}_k$ is a compact metric space as well with the Hausdorff distance ($\emptyset$ is an isolated point in $\mathscr{P}_k$). It follows that $\mathscr{P}_k$ is second countable and so is $\mathscr{P}_k^{l^n+1}$. We have (see \cite{dudley2002real})
\[ \bigotimes_{i=0}^{l^n} \sigma (\mathscr{T}_H) = \sigma \Big(\prod_{i=0}^{l^n} \mathscr{T}_H\Big). \]
Thus the open set $\mathscr{Q}_q$ is $\otimes_{i=0}^{l^n} \sigma (\mathscr{T}_H)$ measurable and the proof of the theorem is complete.
\end{proof}

\begin{corollary}\label{corol1}
For every $n\in \mathbb{N}$ and finite open cover $\alpha$ of $X$, $\mathcal{D}\big(\alpha_{0}^{n-1} \left( \omega \middle\vert \mathcal{E} \right)\big)$ is integrable.
\end{corollary}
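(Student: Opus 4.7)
The plan is to combine the measurability result from the preceding theorem with a uniform pointwise bound on $\mathcal{D}\big(\alpha_0^{n-1}(\omega\mid\mathcal{E})\big)$ in terms of $\#\alpha$ and $n$. Since $(\Omega,\mathcal{F},\mathbb{P})$ is a probability space, a measurable function that is uniformly bounded is automatically integrable, so only the bound is needed.

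First I would recall that, by the definition given just before Lemma \ref{lemma1}, for any finite open cover $\beta$ of a compact set $K$ one has $\text{ord}(\beta) = -1 + \sup_{x\in K}\sum_{U\in\beta} 1_U(x) \leq \#\beta - 1$, because the sum in the supremum counts at most all of the members of $\beta$. Since every cover refines itself, this yields $\mathcal{D}(\beta) \leq \text{ord}(\beta) \leq \#\beta - 1$.

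Next I would apply this inequality to $\beta = \alpha_0^{n-1}(\omega\mid\mathcal{E})$. By construction, this cover of $\mathcal{E}_\omega$ consists of the (possibly empty) sets $A^{(j_0,\dots,j_{n-1})}_{\mathcal{E}}(\omega)$ indexed by tuples $j\in\{1,2,\dots,l\}^{n}$ with $l = \#\alpha$, so $\#\alpha_0^{n-1}(\omega\mid\mathcal{E}) \leq l^{n}$. Therefore
\[
0 \;\leq\; \mathcal{D}\big(\alpha_0^{n-1}(\omega\mid\mathcal{E})\big) \;\leq\; l^{n}-1
\]
for every $\omega\in\Omega$.

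Finally, the previous theorem gives measurability of $\omega\mapsto\mathcal{D}\big(\alpha_0^{n-1}(\omega\mid\mathcal{E})\big)$, and the above uniform bound together with $\mathbb{P}(\Omega)=1$ yields
\[
\int_\Omega \mathcal{D}\big(\alpha_0^{n-1}(\omega\mid\mathcal{E})\big)\,d\mathbb{P}(\omega) \;\leq\; l^{n}-1 \;<\; \infty,
\]
which is the desired integrability. There is no real obstacle here; the only substantive ingredient is the combinatorial bound $\text{ord}(\beta)\leq \#\beta-1$, and the rest is a direct application of the preceding theorem.
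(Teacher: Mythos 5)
Your proposal is correct and follows essentially the same route as the paper: bound $\mathcal{D}\big(\alpha_0^{n-1}(\omega\mid\mathcal{E})\big)$ by $\text{ord}\big(\alpha_0^{n-1}(\omega\mid\mathcal{E})\big)\leq l^n-1$ using $\#\alpha_0^{n-1}(\omega\mid\mathcal{E})\leq l^n$, and combine this uniform bound with the measurability theorem on the probability space. The only difference is that you spell out the elementary inequalities $\mathcal{D}(\beta)\leq\text{ord}(\beta)\leq\#\beta-1$, which the paper takes for granted.
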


\begin{proof}
We can assume $\#\alpha=l$, then $\#\alpha_0^{n-1}\left( \omega \middle\vert \mathcal{E} \right) \leq l^n$. We have \[ D\big(\alpha_{0}^{n-1} \left( \omega \middle\vert \mathcal{E} \right)\big) \leq \text{ord}\big(\alpha_{0}^{n-1} \left( \omega \middle\vert \mathcal{E} \right)\big) \leq l^n - 1 \implies \mathbb{E} D\big(\alpha_{0}^{n-1} \left( \omega \middle\vert \mathcal{E} \right)\big) \leq l^n-1.\]
\end{proof}

% SECTION MEAN TOPOLOGICAL DIMENSION
\section{Mean topological dimension}\label{sec3}

Before introducing the mean topological dimension, we first review some propositions of $\mathcal{D}(\alpha)$. These propositions can be found in \cite{lindenstrauss2000mean} and \cite{coornaert2015topological}.
%\begin{proposition}
%Let $f:X\to Y$ be a map from a set $X$ into a set $Y$. Let $\alpha$ and $\beta$ be two families of subsets of $Y$. Then one has
%\[
%f^{-1}(\alpha\vee\beta) = f^{-1}(\alpha)\vee f^{-1}(\beta).
%\]
%\end{proposition}

\begin{proposition}\label{subadditive}
Let $X$ be a normal space. Let $\alpha$ and $\beta$ be finite open covers of $X$. Then one has
\[
\mathcal{D}(\alpha\vee\beta)\leq\mathcal{D}(\alpha)+\mathcal{D}(\beta).
\]
\end{proposition}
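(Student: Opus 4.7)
The plan is to reduce the inequality to a combinatorial construction of a refinement of $\alpha \vee \beta$ whose multiplicity is additive rather than multiplicative in the contributions of $\alpha$ and $\beta$. Set $d_1 = \mathcal{D}(\alpha)$ and $d_2 = \mathcal{D}(\beta)$. By the very definition of $\mathcal{D}$, I fix open refinements $\alpha^{\ast} \succ \alpha$ and $\beta^{\ast} \succ \beta$ realizing $\text{ord}(\alpha^{\ast}) = d_1$ and $\text{ord}(\beta^{\ast}) = d_2$; it then suffices to exhibit a refinement of $\alpha \vee \beta$ of order at most $d_1 + d_2$.

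The obvious candidate is $\alpha^{\ast} \vee \beta^{\ast}$, which does refine $\alpha \vee \beta$. However, at a point $x$ lying in $a$ members of $\alpha^{\ast}$ and $b$ members of $\beta^{\ast}$, it sits in $ab$ members of $\alpha^{\ast} \vee \beta^{\ast}$, so one obtains only the weak bound $\text{ord}(\alpha^{\ast} \vee \beta^{\ast}) \leq (d_1+1)(d_2+1) - 1$. Turning this product estimate into a sum is the heart of the argument. My approach is to exploit the normality of $X$ through Lemma \ref{lemma1}: first shrink $\alpha^{\ast} = \{U_i\}_{i=1}^p$ and $\beta^{\ast} = \{V_j\}_{j=1}^q$ to closed families $(F_i)$ and $(G_j)$ that are combinatorially equivalent to the open ones (so their orders are preserved at $d_1$ and $d_2$) and satisfy $F_i \subset U_i$ and $G_j \subset V_j$. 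Then, iterating the shrinking-and-thickening procedure on this scaffolding, I construct an open family $\{W_{(i,j)}\}_{(i,j) \in S}$ indexed over a carefully selected subset $S \subseteq \{1,\dots,p\} \times \{1,\dots,q\}$, still refining $\alpha \vee \beta$, but arranged so that at every $x$ the local multiplicity drops from the Cartesian-product count $\#I(x) \cdot \#J(x)$ to an additive count at most $\#I(x) + \#J(x) - 1 \leq d_1 + d_2 + 1$.

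The principal obstacle is precisely this last step: promoting the local incidence information into a globally open family realizing the additive rather than multiplicative bound. It requires iterated application of Lemma \ref{lemma1} with delicate control on which $(i,j)$-patterns meet each point, and it is here that the normality of $X$ is essential; the naive join $\alpha^{\ast} \vee \beta^{\ast}$ must be genuinely thinned, not merely shrunk. The full construction is standard in classical dimension theory and is carried out in \cite{lindenstrauss2000mean} and \cite{coornaert2015topological}, from which the inequality follows.
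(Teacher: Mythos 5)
Your outline correctly isolates the difficulty --- the naive join $\alpha^{\ast}\vee\beta^{\ast}$ only yields the multiplicative bound --- but the step that is supposed to overcome it is never actually carried out, and it is precisely the content of the proposition. Constructing a thinned open family $\{W_{(i,j)}\}_{(i,j)\in S}$ refining $\alpha\vee\beta$ whose local multiplicity is $\#I(x)+\#J(x)-1$ rather than $\#I(x)\cdot\#J(x)$ is not a routine iteration of Lemma~\ref{lemma1}: it is, at the level of covers, the product theorem for covering dimension, and your sketch gives no selection rule for $S$, no inductive scheme, and no verification that the thinned family still covers $X$ and still refines $\alpha\vee\beta$. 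Deferring exactly this step to \cite{lindenstrauss2000mean} and \cite{coornaert2015topological} does not close the gap, and moreover those sources do not prove the inequality by any such direct thinning of the join: there the argument goes through the compatible-map characterization of $\mathcal{D}$. One takes an $\alpha$-compatible map $f:X\to P$ and a $\beta$-compatible map $g:X\to Q$ into polyhedra (nerves of minimizing refinements, built from partitions of unity, which is where normality enters) with $\dim P\leq\mathcal{D}(\alpha)$ and $\dim Q\leq\mathcal{D}(\beta)$, observes that $f\times g:X\to P\times Q$ is $(\alpha\vee\beta)$-compatible, and uses $\dim(P\times Q)\leq\dim P+\dim Q$ for polyhedra; Proposition~\ref{lemma3} then gives $\mathcal{D}(\alpha\vee\beta)\leq\mathcal{D}(\alpha)+\mathcal{D}(\beta)$. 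So the one step your argument leaves open is not available off the shelf in the form you invoke it.

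If you want a self-contained proof in the spirit of this paper, the efficient route is the one just described, since the needed tools (Propositions~\ref{compatible} and~\ref{lemma3}) are already quoted here; what must be supplied is the converse direction of the characterization, namely that a finite open cover $\gamma$ with $\mathrm{ord}(\gamma)=\mathcal{D}(\alpha)$ admits a $\gamma$-subordinate partition of unity whose nerve map is $\alpha$-compatible and lands in a complex of dimension $\mathrm{ord}(\gamma)$. A genuinely combinatorial proof along your original lines can be done, but it amounts to re-proving the product theorem for dimension and would need the full bookkeeping you only allude to. (For context, the paper itself offers no proof of this proposition and simply cites \cite{lindenstrauss2000mean} and \cite{coornaert2015topological}.)
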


\begin{definition}
Let $X$ and $Y$ be topological spaces. Let $\alpha$ be a finite open cover of $X$. A continuous map $f:X\to Y$ is said to be $\alpha$-compatible if there exists a finite open cover $\beta$ of $Y$ such that $f^{-1}(\beta)\succ\alpha$. We will use the notation $f\succ\alpha$ to denote that $f$ is $\alpha$-compatible.
\end{definition}

\begin{proposition} \label{compatible}
If $X$ is compact, $f:X\to Y$ a continuous function such that for every $y\in Y$, $f^{-1}\{y\}$ is a subset of some $U\in\alpha$, then $f$ is $\alpha$-compatible.
\end{proposition}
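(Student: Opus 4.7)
The plan is to construct $\beta$ explicitly from $\alpha$ by setting, for each $U\in\alpha$,
\[
W_U := Y\setminus f(X\setminus U),
\]
and taking $\beta := \{W_U : U\in\alpha\}$. The proof then reduces to three routine verifications: that each $W_U$ is open in $Y$, that $f^{-1}(W_U)\subset U$, and that the $W_U$ together cover $Y$.

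For openness, I would use compactness of $X$: since $U$ is open, $X\setminus U$ is closed in the compact space $X$, hence compact; continuity of $f$ then makes $f(X\setminus U)$ a compact subset of $Y$, and (provided $Y$ is Hausdorff, which is automatic in the compact-metric setting of the paper) this compact set is closed, so $W_U$ is open. For the containment, if $y\in W_U$ then $y\notin f(X\setminus U)$, so $f^{-1}\{y\}\cap(X\setminus U)=\emptyset$, i.e.\ $f^{-1}\{y\}\subset U$; unioning over $y\in W_U$ yields $f^{-1}(W_U)\subset U$, which is exactly $f^{-1}(\beta)\succ\alpha$. For the covering property, fix $y\in Y$; by hypothesis there is some $U\in\alpha$ with $f^{-1}\{y\}\subset U$, hence $y\notin f(X\setminus U)$, i.e.\ $y\in W_U$ (the case $y\notin f(X)$ is automatic because then $f^{-1}\{y\}=\emptyset$ lies in every $U\in\alpha$, so $y\in W_U$ for every $U$).

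I anticipate no substantive obstacle. The only mildly subtle ingredient is that the compact image $f(X\setminus U)$ be closed in $Y$, so that $W_U=Y\setminus f(X\setminus U)$ is open; this is where some separation axiom on $Y$ enters, and it is available for free in the paper's setting where the target spaces of interest are metric. Everything else is bookkeeping with complements and preimages.
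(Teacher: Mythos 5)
Your argument is correct and is essentially the standard proof from the sources the paper cites for this proposition (Lindenstrauss--Weiss and Coornaert); the paper itself supplies no proof, so there is nothing different to compare against: taking $W_U=Y\setminus f(X\setminus U)$ and using that $f$ is a closed map from a compact space is exactly the usual route. Your caveat about $Y$ being Hausdorff is genuine (the statement as literally written can fail for a non-Hausdorff target), but it is harmless here since the proposition is applied with $Y=\mathbb{R}^{kN}$.
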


\begin{proposition}\label{lemma3}
Let $X$ be a topological space and $\alpha$ a finite open cover of $X$. Suppose that there exist a topological space $Y$ and an $\alpha$-compatible continuous map $f:X\to Y$. Then one has
$\mathcal{D}(\alpha)\leq\text{dim}Y$.
\end{proposition}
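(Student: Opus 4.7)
The plan is to refine the $\alpha$-compatible cover downstairs to near-optimal order and pull the refinement back through $f$. The entire argument reduces to chaining together the definitions, so there is no deep obstacle; the only substantive observation is that taking preimages cannot raise the order of a cover.

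First, I would unpack the hypothesis: since $f \succ \alpha$, there is a finite open cover $\beta$ of $Y$ with $f^{-1}(\beta) \succ \alpha$. Next, I would invoke the definition of the Lebesgue covering dimension of $Y$: every finite open cover of $Y$ admits a finite open refinement whose order does not exceed $\text{dim}\,Y$. Apply this to $\beta$ to obtain a finite open cover $\beta'$ of $Y$ satisfying $\beta' \succ \beta$ and $\text{ord}(\beta') \leq \text{dim}\,Y$.

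Now I would consider the finite open cover $\gamma := f^{-1}(\beta')$ of $X$. Two facts need checking. The refinement relation is preserved by continuous preimages and is transitive, so $f^{-1}(\beta') \succ f^{-1}(\beta) \succ \alpha$, giving $\gamma \succ \alpha$. For the order, observe that for any $x \in X$,
\[
\sum_{V \in \beta'} \mathbf{1}_{f^{-1}(V)}(x) = \sum_{V \in \beta'} \mathbf{1}_{V}(f(x)) \leq 1 + \text{ord}(\beta'),
\]
so taking the supremum over $x$ yields $\text{ord}(\gamma) \leq \text{ord}(\beta')$. (Equivalently, if $f^{-1}(V_{i_1}) \cap \dots \cap f^{-1}(V_{i_k})$ contains some $x$, then $V_{i_1} \cap \dots \cap V_{i_k}$ contains $f(x)$, so multiplicities downstairs dominate multiplicities upstairs.)

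Finally, since $\gamma$ is a finite open cover of $X$ refining $\alpha$, the definition of $\mathcal{D}(\alpha)$ gives
\[
\mathcal{D}(\alpha) \leq \text{ord}(\gamma) \leq \text{ord}(\beta') \leq \text{dim}\,Y,
\]
which is the desired inequality. As noted, there is no real obstacle here—the proof is essentially a dictionary translation—so the only thing to be careful about is ensuring that $\beta'$ is chosen as a refinement of $\beta$ (not merely an arbitrary cover of $Y$) so that the refinement relation $\gamma \succ \alpha$ can be transferred up through $f^{-1}$.
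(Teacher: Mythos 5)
Your proof is correct and is essentially the same argument the paper relies on (the paper only quotes this proposition from Lindenstrauss--Weiss and Coornaert): refine the compatible cover $\beta$ of $Y$ to a finite open cover of order at most $\dim Y$, pull it back through $f$, and use that taking preimages cannot increase the order while refinement is preserved and transitive. The only cosmetic omission is the trivial case $\dim Y=\infty$, where the inequality holds vacuously.
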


Note that \begin{equation*} \begin{split}
\mathcal{D}\big(\alpha_{0}^{n+m-1} \left( \omega \middle\vert \mathcal{E} \right)\big)
& \leq \mathcal{D}\big(\alpha_{0}^{n-1} \left( \omega \middle\vert \mathcal{E} \right)\big) + \mathcal{D}\Big(\bigvee_{i=n}^{n+m-1} (T^i_\omega)^{-1} \alpha \left( \vartheta^i \omega \middle\vert \mathcal{E} \right)\Big) \\
& =  \mathcal{D}\big(\alpha_{0}^{n-1} \left( \omega \middle\vert \mathcal{E} \right)\big) + \mathcal{D}\Big( (T^n_\omega)^{-1} \bigvee_{i=0}^{m-1} (T^i_\omega)^{-1} \alpha \left( \vartheta^{n+i} \omega \middle\vert \mathcal{E} \right)\Big) \\
& \leq \mathcal{D}\big(\alpha_{0}^{n-1} \left( \omega \middle\vert \mathcal{E} \right)\big) + \mathcal{D}\Big(\bigvee_{i=0}^{m-1} (T^i_\omega)^{-1} \alpha \left( \vartheta^{n+i} \omega \middle\vert \mathcal{E} \right)\Big).
\end{split}
\end{equation*}
Set $\mathcal{D}(\alpha_{0}^{n-1} \left( \omega \middle\vert \mathcal{E} \right)) = q_n(\omega)$, then \[ q_{n+m}(\omega) \leq q_n(\omega) + q_m(\vartheta^n \omega).\] By Corollary \ref{corol1} and Kingman's subadditive ergodic theorem (see \cite{arnold2013random}), there exists an integrable function $q:\Omega \to \bar{\mathbb{R}}$ such that for $\mathbb{P}$-a.a. $\omega$, \[ \frac{1}{n} q_n(\omega) \to q(\omega)\] and satisfy
\[ \mathbb{E}q = \lim_{n\to\infty} \frac{1}{n} \mathbb{E}q_n = \inf_{n\in {\mathbb{N}^{+}}} \frac{1}{n} \mathbb{E}q_n < \infty.\]
So we can define the mean topological dimension for a bundle RDS $T$ with respect to an open cover $\alpha$ of $X$ as
\[
\text{mdim} (T,\alpha) = \lim_{n\to\infty} \mathbb{E} \mathcal{D}\big(\alpha_{0}^{n-1} \left( \omega \middle\vert \mathcal{E} \right)\big)
\]
\begin{remark} $\text{mdim} (T,\alpha) \geq 0$. \end{remark}
\begin{remark} \label{rek1}$\alpha\succ\beta\implies\text{mdim}(T,\alpha)\geq\text{mdim}(T,\beta)$. \end{remark}
\begin{remark} $\text{mdim}(T,\alpha) \leq \mathbb{E} \mathcal{D}\big(\alpha \left( \omega \middle\vert \mathcal{E} \right)\big)$. \end{remark}
\begin{proof}
By Kingman's subadditive ergodic theorem and Proposition \eqref{subadditive} we have
\begin{equation*} \begin{split}
\text{mdim}(T,\alpha) & = \inf_{n\in\mathbb{N}^+} \frac{1}{n} \mathbb{E} \mathcal{D}\big(\alpha_{0}^{n-1} \left( \omega \middle\vert \mathcal{E} \right)\big) \\
& \leq \frac{1}{n} \mathbb{E} \mathcal{D}\Big(\bigvee_{i=0}^{n-1} (T^i_\omega)^{-1} \alpha \left( \vartheta^{i} \omega \middle\vert \mathcal{E} \right)\Big) \\
& \leq \frac{1}{n} \mathbb{E} \sum_{i=0}^{n-1} \mathcal{D}\Big((T^i_\omega)^{-1} \alpha \left( \vartheta^{i} \omega \middle\vert \mathcal{E} \right)\Big) \\
& \leq \frac{1}{n} \sum_{i=0}^{n-1} \mathbb{E} \mathcal{D}\Big( \alpha \left( \vartheta^{i} \omega \middle\vert \mathcal{E} \right)\Big) \\
& = \frac{1}{n} \sum_{i=0}^{n-1} \mathbb{E} \mathcal{D}\big( \alpha \left( \omega \middle\vert \mathcal{E} \right)\big) \\
& = \mathbb{E} \mathcal{D}\big( \alpha \left( \omega \middle\vert \mathcal{E} \right)\big).
\end{split}
\end{equation*}
\end{proof}
\begin{definition}
The mean topological dimension of bundle RDS $T$, denoted by $\text{mdim}(T)$, is defined by \[\text{mdim}(T)=\sup_{\alpha} \text{mdim}(T,\alpha),\] where $\alpha$ runs over all finite open covers of $X$.
\end{definition}
\begin{remark} $\text{mdim}(T)\geq 0$.\end{remark}
\begin{remark} By the compactness of $X$ and every subcover is a refinement, $\alpha$ could run over all open covers (not necessarily finite) of $X$ in the definition of mdim. \end{remark}
\begin{remark} By the compactness of $X$, one can replace $\sup_\alpha$ by $\sup_k$ and pick up any sequence $\alpha(k)$ of open covers of $X$ such that the diameter of $\alpha(k)$ tends to zero as $k\to\infty$. \end{remark}
\begin{proposition}
If for $\mathbb{P}$-a.a. $\omega$, $\mathcal{E}_\omega \subset \mathcal{E}_{\theta\omega}$ and $T_\omega$ is an inclusion mapping, namely, $\Theta(\omega,x)=(\vartheta\omega,x)$, then $\text{mdim}(T)=0$.
\end{proposition}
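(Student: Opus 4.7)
The plan is to show that $\text{mdim}(T,\alpha)=0$ for every finite open cover $\alpha$ of $X$; taking the supremum will then force $\text{mdim}(T)=0$. Unpacking the hypothesis, $T_{\omega}x=x$ iterates to $T_{\omega}^{i}x=x$, and (by $\vartheta$-invariance of $\mathbb{P}$ together with a countable intersection of full-measure sets) the inclusion $\mathcal{E}_{\omega}\subset\mathcal{E}_{\vartheta\omega}$ iterates to $\mathcal{E}_{\omega}\subset\mathcal{E}_{\vartheta^{i}\omega}$ for every $i\geq 0$ and $\mathbb{P}$-a.a.\ $\omega$. For such $\omega$,
\[
(T_{\omega}^{i})^{-1}A^{(j)}_{\mathcal{E}}(\vartheta^{i}\omega)=\{x\in\mathcal{E}_{\omega}:x\in A^{(j)}\cap\mathcal{E}_{\vartheta^{i}\omega}\}=A^{(j)}\cap\mathcal{E}_{\omega}=A^{(j)}_{\mathcal{E}}(\omega),
\]
so $(T_{\omega}^{i})^{-1}\alpha(\vartheta^{i}\omega\,|\,\mathcal{E})=\alpha(\omega\,|\,\mathcal{E})$ as families of subsets of $\mathcal{E}_{\omega}$, and consequently $\alpha_{0}^{n-1}(\omega\,|\,\mathcal{E})$ is simply the $n$-fold self-join $\bigvee_{i=0}^{n-1}\alpha(\omega\,|\,\mathcal{E})$, whose elements are the intersections $\bigcap_{i=0}^{n-1}A^{(j_{i})}_{\mathcal{E}}(\omega)$.

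The heart of the argument is the identity
\[
\mathcal{D}\bigl(\alpha_{0}^{n-1}(\omega\,|\,\mathcal{E})\bigr)=\mathcal{D}\bigl(\alpha(\omega\,|\,\mathcal{E})\bigr)\qquad\text{for }\mathbb{P}\text{-a.a. }\omega\text{ and all }n\geq 1.
\]
On one hand, the self-join refines $\alpha(\omega\,|\,\mathcal{E})$, so any cover $\beta\succ\alpha_{0}^{n-1}(\omega\,|\,\mathcal{E})$ automatically satisfies $\beta\succ\alpha(\omega\,|\,\mathcal{E})$. On the other hand, taking $j_{0}=\cdots=j_{n-1}=j$ shows that each $A^{(j)}_{\mathcal{E}}(\omega)$ appears as an element of $\alpha_{0}^{n-1}(\omega\,|\,\mathcal{E})$; hence every refinement of $\alpha(\omega\,|\,\mathcal{E})$ is also a refinement of the self-join. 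The two covers share the same class of refinements and thus the same value of $\mathcal{D}$.

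Combining these observations, $\mathbb{E}\,\mathcal{D}\bigl(\alpha_{0}^{n-1}(\omega\,|\,\mathcal{E})\bigr)=\mathbb{E}\,\mathcal{D}\bigl(\alpha(\omega\,|\,\mathcal{E})\bigr)$, a finite constant independent of $n$ by Corollary \ref{corol1}. Dividing by $n$ and letting $n\to\infty$ yields $\text{mdim}(T,\alpha)=0$, and taking the supremum over $\alpha$ gives $\text{mdim}(T)=0$. I do not anticipate any serious obstacle; the only delicate ingredient is the equality $\mathcal{D}(\alpha)=\mathcal{D}(\alpha\vee\cdots\vee\alpha)$, which reduces to the trivial identity $A=A\cap\cdots\cap A$ and the fact that $\mathcal{D}$ depends only on the class of refinements of the cover.
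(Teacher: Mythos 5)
Your proof is correct and follows essentially the same route as the paper: the inclusion hypothesis turns $\alpha_0^{n-1}\left(\omega\middle\vert\mathcal{E}\right)$ into a self-join of $\alpha\left(\omega\middle\vert\mathcal{E}\right)$, whose $\mathcal{D}$-value is bounded independently of $n$, so dividing by $n$ kills $\text{mdim}(T,\alpha)$. You merely make explicit the identity $\mathcal{D}(\alpha\vee\cdots\vee\alpha)=\mathcal{D}(\alpha)$ (via the shared refinement class), which the paper uses implicitly.
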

\begin{proof}
Suppose $\alpha$ is any open cover of $X$ and $\#\alpha=l$. We assume that $T_\omega$ is inclusion on a set $A\subset\Omega$ with full measure. Since $\vartheta$ is a $\mathbb{P}$-preserving transformation, $\{\omega:\theta\omega\notin A\}$ is a null set. For $\mathbb{P}$-a.a. $\omega$, we have
\begin{equation*} \begin{split}
\mathcal{D}\big(\alpha_{0}^{l-1} \left( \omega \middle\vert \mathcal{E} \right)\big) & = \mathcal{D}\big(\alpha_{0}^{l} \left( \omega \middle\vert \mathcal{E} \right)\big) = \mathcal{D}\big(\alpha_{0}^{l+1} \left( \omega \middle\vert \mathcal{E} \right)\big) = \dots \\
& = \mathcal{D}\big( \underbrace{ \alpha \left( \omega \middle\vert \mathcal{E} \right) \vee \alpha \left( \omega \middle\vert \mathcal{E} \right) \vee \dots \vee \alpha \left( \omega \middle\vert \mathcal{E} \right)}_{l} \big),
\end{split}
\end{equation*}
so \[ \text{mdim}(T,\alpha)=\lim_{n\to\infty} \frac{1}{n} \mathbb{E} \mathcal{D}\big(\alpha_{0}^{n-1} \left( \omega \middle\vert \mathcal{E} \right)\big) = 0 \implies \text{mdim}(T)=0. \]
\end{proof}
\begin{proposition}
If $X$ has finite topological dimension, then $\text{mdim}(T)=0$.
\end{proposition}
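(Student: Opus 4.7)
The plan is to bound $\mathcal{D}(\alpha_0^{n-1}(\omega\mid\mathcal{E}))$ uniformly in $n$ and $\omega$ by $\dim X$, and then let the $\tfrac{1}{n}$ in the definition of $\operatorname{mdim}(T,\alpha)$ kill it.

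First I would fix an arbitrary finite open cover $\alpha$ of $X$ and, for $\mathbb{P}$-a.a. $\omega$ and every $n\in\mathbb{N}$, observe that $\alpha_0^{n-1}(\omega\mid\mathcal{E})$ is by construction a finite open cover of the compact subset $\mathcal{E}_\omega\subset X$. By the very definition of the covering dimension, applied to the compact (hence normal) space $\mathcal{E}_\omega$,
\[
\mathcal{D}\bigl(\alpha_0^{n-1}(\omega\mid\mathcal{E})\bigr)\;\leq\;\dim\mathcal{E}_\omega.
\]
Since $X$ is a compact metric space, covering dimension is monotone under passage to (closed) subspaces, so $\dim\mathcal{E}_\omega\leq\dim X$. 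Let $d:=\dim X<\infty$ by hypothesis; then
\[
\mathcal{D}\bigl(\alpha_0^{n-1}(\omega\mid\mathcal{E})\bigr)\;\leq\;d\qquad\text{for }\mathbb{P}\text{-a.a. }\omega\text{ and all }n\in\mathbb{N}.
\]

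Next I would take expectations, using Corollary \ref{corol1} to justify integrability, to get $\mathbb{E}\,\mathcal{D}(\alpha_0^{n-1}(\omega\mid\mathcal{E}))\leq d$ uniformly in $n$. Dividing by $n$ and letting $n\to\infty$ in the (correct) formula
\[
\operatorname{mdim}(T,\alpha)\;=\;\lim_{n\to\infty}\tfrac{1}{n}\,\mathbb{E}\,\mathcal{D}\bigl(\alpha_0^{n-1}(\omega\mid\mathcal{E})\bigr)
\]
yields $\operatorname{mdim}(T,\alpha)\leq\lim_{n\to\infty} d/n=0$. Combined with the first Remark after the definition of $\operatorname{mdim}(T,\alpha)$ (nonnegativity), this forces $\operatorname{mdim}(T,\alpha)=0$. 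Taking the supremum over all finite open covers $\alpha$ of $X$ gives $\operatorname{mdim}(T)=0$, as required.

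There is essentially no serious obstacle here; the only point worth flagging is that one must invoke the subspace monotonicity $\dim\mathcal{E}_\omega\leq\dim X$ for covering dimension, which is standard for compact metric (hence separable metric) spaces. Everything else is a one-line estimate combined with the $1/n$ normalization built into the definition of mean topological dimension.
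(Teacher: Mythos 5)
Your proof is correct and follows essentially the same route as the paper: bound $\mathcal{D}\bigl(\alpha_0^{n-1}(\omega\mid\mathcal{E})\bigr)\leq\dim\mathcal{E}_\omega\leq\dim X<\infty$ and let the $1/n$ normalization force $\operatorname{mdim}(T,\alpha)=0$ for every finite open cover $\alpha$. You simply spell out the expectation and limit steps (and correctly read the definition of $\operatorname{mdim}(T,\alpha)$ as including the factor $1/n$), which the paper leaves implicit.
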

\begin{proof}
Let $\alpha$ be any open cover of $X$. Then \[ \mathcal{D}\big(\alpha_{0}^{n-1} \left( \omega \middle\vert \mathcal{E} \right)\big) \leq \text{dim} \mathcal{E}_\omega \leq \text{dim} X < \infty \] and hence $\text{mdim}(T)=0$.
\end{proof}
\begin{proposition}
If a random compact set $\mathcal{C}\subset\mathcal{E}$ is strictly forward invariant, then $\text{mdim}\left(T\middle\vert_\mathcal{C}\right) \leq \text{mdim}(T)$.
\end{proposition}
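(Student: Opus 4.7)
The plan is to reduce the inequality to a pointwise (in $\omega$) comparison of the dimensional quantity $\mathcal{D}$ on $\mathcal{C}_\omega$ versus $\mathcal{E}_\omega$, and then integrate. Fix an arbitrary finite open cover $\alpha$ of $X$. By the strict forward invariance of $\mathcal{C}$, we have $T_\omega^i \mathcal{C}_\omega \subset \mathcal{C}_{\vartheta^i\omega}$ for every $i \geq 0$ and $\mathbb{P}$-a.a.\ $\omega$. Consequently, for every multi-index $j = (j_0, \dots, j_{n-1})$, the element $A^j_\mathcal{C}(\omega)$ of $\alpha_0^{n-1}(\omega \mid \mathcal{C})$ is exactly $A^j_\mathcal{E}(\omega) \cap \mathcal{C}_\omega$, so the cover $\alpha_0^{n-1}(\omega \mid \mathcal{C})$ is the trace on $\mathcal{C}_\omega$ of $\alpha_0^{n-1}(\omega \mid \mathcal{E})$.

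The first key step is the pointwise inequality
\[
\mathcal{D}\bigl(\alpha_0^{n-1}(\omega \mid \mathcal{C})\bigr) \leq \mathcal{D}\bigl(\alpha_0^{n-1}(\omega \mid \mathcal{E})\bigr).
\]
To see this, let $\beta$ be any finite open cover of $\mathcal{E}_\omega$ refining $\alpha_0^{n-1}(\omega \mid \mathcal{E})$. Then $\beta' = \{B \cap \mathcal{C}_\omega : B \in \beta\}$ is a finite open cover of $\mathcal{C}_\omega$, and since refinement is preserved under intersection with $\mathcal{C}_\omega$, $\beta'$ refines $\alpha_0^{n-1}(\omega \mid \mathcal{C})$. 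Moreover,
\[
\operatorname{ord}(\beta') = -1 + \sup_{x \in \mathcal{C}_\omega} \sum_{B \in \beta} \mathbf{1}_B(x) \leq -1 + \sup_{x \in \mathcal{E}_\omega} \sum_{B \in \beta} \mathbf{1}_B(x) = \operatorname{ord}(\beta),
\]
because the supremum is taken over a smaller set. Taking the infimum over $\beta$ yields the claimed inequality.

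The second step is to integrate. By the first step and the fact that both quantities are nonnegative and integrable (Corollary \ref{corol1}), we obtain
\[
\frac{1}{n}\,\mathbb{E}\, \mathcal{D}\bigl(\alpha_0^{n-1}(\omega \mid \mathcal{C})\bigr) \leq \frac{1}{n}\,\mathbb{E}\, \mathcal{D}\bigl(\alpha_0^{n-1}(\omega \mid \mathcal{E})\bigr),
\]
and letting $n \to \infty$ gives $\operatorname{mdim}(T \mid_\mathcal{C}, \alpha) \leq \operatorname{mdim}(T, \alpha)$. Taking the supremum over all finite open covers $\alpha$ of $X$ yields $\operatorname{mdim}(T \mid_\mathcal{C}) \leq \operatorname{mdim}(T)$.

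I do not expect a genuine obstacle here; the argument is essentially just monotonicity of covering dimension under restriction to a closed subset, combined with forward invariance to ensure the $n$-step dynamical cover restricts cleanly. The only subtlety worth double-checking is that the trace $\beta'$ really does refine $\alpha_0^{n-1}(\omega \mid \mathcal{C})$, which is immediate from the identity $A^j_\mathcal{C}(\omega) = A^j_\mathcal{E}(\omega) \cap \mathcal{C}_\omega$ established in the first paragraph.
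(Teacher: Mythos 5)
Your proof is correct and takes essentially the same route as the paper: for each $\omega$ and $n$, trace any refining open cover $\beta$ of $\mathcal{E}_\omega$ onto $\mathcal{C}_\omega$, observe the order does not increase, deduce the pointwise inequality $\mathcal{D}\bigl(\alpha_0^{n-1}(\omega\mid\mathcal{C})\bigr)\leq\mathcal{D}\bigl(\alpha_0^{n-1}(\omega\mid\mathcal{E})\bigr)$, and then integrate and take the supremum over covers. Your explicit check that strict forward invariance gives $A^j_\mathcal{C}(\omega)=A^j_\mathcal{E}(\omega)\cap\mathcal{C}_\omega$, so that the trace cover indeed refines $\alpha_0^{n-1}(\omega\mid\mathcal{C})$, is a detail the paper leaves implicit.
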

\begin{proof}
It suffices to proof that for any open cover $\alpha$ of $X$ and $n\in\mathbb{N}^+$,
\[ \mathcal{D}\big(\alpha_{0}^{n-1} \left( \omega \middle\vert \mathcal{C} \right)\big) \leq \mathcal{D}\big(\alpha_{0}^{n-1} \left( \omega \middle\vert \mathcal{E} \right)\big).\]
To prove this, we only need to show that for any open cover $\beta\succ\alpha^{n-1}_0 \left( \omega \middle\vert \mathcal{E} \right)$ of $\mathcal{E}_\omega$, there exists an open cover $\gamma$ of $\mathcal{C}_\omega$ that refines $\alpha^{n-1}_0 \left( \omega \middle\vert \mathcal{C} \right)$ such that \[
\sup_{x\in\mathcal{C}_\omega} \sum_{V\in\gamma} 1_V (x) \leq \sup_{x\in\mathcal{E}_\omega} \sum_{U\in\beta} 1_U (x). \]Indeed, we can take $\gamma = \{ U\cap\mathcal{C}_\omega : U\in\beta\}$.
\end{proof}

For each $n\geq 1$ and a positive random variable $\epsilon=\epsilon (\omega)$ define a family of metrics $d^{\omega}_{\epsilon,n}$ on $\mathcal{E}_\omega$ by the formula
\[
d^{\omega}_{\epsilon,n}(x,y)=\max_{0\leq k<n} \frac{d(T^k_\omega x,T^k_\omega y)}{\epsilon({\vartheta}^{k}\omega)}, \qquad x,y\in\mathcal{E}_\omega.
\]

Let $\alpha = \{ A^{(i)}:i=1,\dots,l\}$ be a finite open cover of $X$.
Define the mesh of $\alpha\left(\omega\middle\vert\mathcal{E}\right)$ according to the metric $d^\omega_{\epsilon,n}$ by
\[
\text{diam} \big( \alpha\left(\omega\middle\vert\mathcal{E}\right), d^\omega_{\epsilon,n}\big) = \max_{1\leq i \leq l} \text{diam} \big(A^{(i)}_{\mathcal{E}}(\omega),d^\omega_{\epsilon,n}\big)
\]
where
\[
\text{diam} \big(A^{(i)}_{\mathcal{E}}(\omega),d^\omega_{\epsilon,n}\big) = \sup\{ d^\omega_{\epsilon,n} (x,y) : x,y\in A^{(i)}_{\mathcal{E}}(\omega)\}.
\]
Fix $\omega$, let $\text{cov}(\omega,\epsilon,n)$ be the minimal cardinality of a covering of $\mathcal{E}_\omega$ by sets of $d^\omega_{\epsilon,n}$-diameter less than 1, that is
\[
\text{cov}(\omega,\epsilon,n) = \inf \{ \# \alpha : \text{diam} \big( \alpha\left(\omega\middle\vert\mathcal{E}\right), d^\omega_{\epsilon,n}\big) < 1 \}.
\]

One familiar with the classic topological entropy theory in the deterministic case will have no difficulty in extending separated sets to the random case. We define
\[
\text{sep}(\omega,\epsilon,n) = \sup \{ \# F : \forall x,y\in F\subset \mathcal{E}_\omega , x \ne y \implies d^\omega_{\epsilon,n} (x,y) \geq 1 \}
\]
%and
%\[
%\text{span}(\omega,\epsilon,n) = \inf \{ \# A : \forall x\in \mathcal{E}_\omega , \exists y\in A \text{ such that } d^\omega_{\epsilon,n} (x,y) < 1 \}
%\]
as the maximum cardinality of $(\omega,\epsilon,n)$-separated sets.
As in the deterministic case (see \cite{brin2002introduction}), we have
\begin{equation}\label{ineq1}
\text{sep}(\omega,2\epsilon,n) \leq \text{cov}(\omega,2\epsilon,n) \leq \text{sep}(\omega,\epsilon,n).
\end{equation}

Set \[
S(\omega,\epsilon) = \lim_{n\to\infty} \frac{1}{n} \log \text{cov} (\omega,\epsilon,n).
\]
Noticed that $\log\text{cov}(\omega,\epsilon,n)$ is a subadditive function of $n$ and the limit above can be replaced by an infimum.
Obviously $S$ is monotone nondecreasing as $\epsilon\to 0$, and for the purpose of measuring how fast it increase, we define the metric mean dimension of $T$ on the fiber $\mathcal{E}_\omega$ as
\[
\text{mdim}_\text{M} (T,\omega) = \liminf_{1 > \epsilon\to 0} \frac{S(\omega,\epsilon)}{-\log \epsilon}.
\]
It follows from \eqref{ineq1} and $\lim_{\epsilon\to 0} \frac{\log\epsilon}{\log2\epsilon} = 1$ that
\begin{align*}
\text{mdim}_\text{M} (T,\omega) & = \liminf_{1>\epsilon\to0} \frac{1}{-\log\epsilon} \limsup_{n\to\infty} \frac{1}{n} \log \text{sep} (\omega,\epsilon,n) \\
& = \liminf_{1>\epsilon\to0} \frac{1}{-\log\epsilon} \liminf_{n\to\infty} \frac{1}{n} \log \text{sep} (\omega,\epsilon,n).
\end{align*}

Since $\text{sep}(\omega,\epsilon,n)$ is measurable (see \cite{kifer2001topological}), it follows that $\text{mdim}_\text{M}(T)(\omega)$ is measurable as well. We have the following definition.
\begin{definition}
The metric mean topological dimension of bundle RDS $T$, denoted by $\text{mdim}_\text{M}(T)$, is defined by
\[
\text{mdim}_\text{M}(T)=\mathbb{E} \text{mdim}_\text{M}(T,\omega).
\]
\end{definition}

Set \[
S'(\omega,\epsilon) = \limsup_{n\to\infty} \frac{1}{n} \log \text{sep} (\omega,\epsilon,n)\]
and compare
\[
\text{mdim}_\text{M}(T) = \mathbb{E} \liminf_{1>\epsilon\to 0} \frac{S'(\omega,\epsilon)}{-\log\epsilon}
\]
with (see \cite{kifer2001topological})
\[
h_\text{top}(T) = \mathbb{E} \lim_{\epsilon\to 0} S'(\omega,\epsilon).
\]
We immediately conclude the relationship between the metric mean dimension and the topological entropy of a system.
\begin{proposition}
If $\text{mdim}_\text{M}(T) \ne 0$, then $h_\text{top}(T) = \infty$.
\end{proposition}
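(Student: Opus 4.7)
The plan is to prove the contrapositive: if $h_{\text{top}}(T)<\infty$, then $\text{mdim}_{\text{M}}(T)=0$. The key observation, which drives everything, is that $S'(\omega,\epsilon)$ is monotone nondecreasing as $\epsilon\to 0$ (smaller $\epsilon$ gives more separated points), so the limit $\lim_{\epsilon\to 0}S'(\omega,\epsilon)$ always exists in $[0,\infty]$.

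First I would assume $h_{\text{top}}(T)=\mathbb{E}\lim_{\epsilon\to 0}S'(\omega,\epsilon)<\infty$. Since the integrand is nonnegative, this forces $\lim_{\epsilon\to 0}S'(\omega,\epsilon)<\infty$ for $\mathbb{P}$-a.a.\ $\omega$; call this finite value $M(\omega)$. By monotonicity, $0\le S'(\omega,\epsilon)\le M(\omega)<\infty$ for all sufficiently small $\epsilon>0$, depending on $\omega$.

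Next, for such $\omega$, the quotient $\frac{S'(\omega,\epsilon)}{-\log\epsilon}$ has a bounded numerator and a denominator tending to $+\infty$ as $\epsilon\to 0^{+}$, so
\[
\liminf_{1>\epsilon\to 0}\frac{S'(\omega,\epsilon)}{-\log\epsilon}=0\qquad\text{for }\mathbb{P}\text{-a.a.\ }\omega.
\]
Taking expectations then yields $\text{mdim}_{\text{M}}(T)=\mathbb{E}\liminf_{1>\epsilon\to 0}\frac{S'(\omega,\epsilon)}{-\log\epsilon}=0$, which is the contrapositive of the claim.

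I do not expect any serious obstacle here: measurability of $S'(\omega,\epsilon)$ in $\omega$ (hence of the $\liminf$) was already noted before the definition of $\text{mdim}_{\text{M}}(T)$, and the argument is purely a fiberwise estimate followed by integration, with no interchange of limit and integral needed. The only thing to watch is to phrase the bound on $S'(\omega,\epsilon)$ uniformly enough in $\epsilon$ for the ratio to vanish, which monotonicity in $\epsilon$ delivers immediately.
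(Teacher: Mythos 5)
Your argument is correct and is essentially the paper's proof run in the contrapositive direction: both rest on the same fiberwise observation that finiteness of $\lim_{\epsilon\to 0}S'(\omega,\epsilon)$ forces $\liminf_{1>\epsilon\to 0}\frac{S'(\omega,\epsilon)}{-\log\epsilon}=0$ (equivalently, a positive fiberwise ratio forces $\lim_{\epsilon\to 0}S'(\omega,\epsilon)=\infty$), followed by integration over $\Omega$. Your contrapositive phrasing even spares you the paper's extraction of a positive-measure set via the sets $A_k$, but the underlying idea is the same.
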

\begin{proof}
If $\text{mdim}_\text{M}(T) > 0$, then there exists a set $A\subset \Omega$ with positive measure such that $\text{mdim}_\text{M}(T)(\omega) > 0$ on $A$. Let
\[
A_k = \left\{ \omega\in\Omega : \liminf_{1>\epsilon\to 0} \frac{S'(\omega,\epsilon)}{-\log\epsilon} \geq \frac{1}{k} \right\}
\]
and we have $\cup_{k=1}^{\infty} A_k = A$, so there exists an integer $m$ such that $\mathbb{P}(A_m)>0$ and for any $\omega\in A_m$,
\[
\lim_{\epsilon\to 0} S'(\omega,\epsilon) = \infty.
\]
So
\[
h_\text{top}(T) = \mathbb{E} \lim_{\epsilon\to 0} S'(\omega,\epsilon) \geq \int_{A_m} \lim_{\epsilon\to 0} S'(\omega,\epsilon) \mathrm{d}\mathbb{P} = \infty.
\]
\end{proof}
\begin{corollary}
If $T$ has finite topological entropy, then $\text{mdim}_\text{M}(T) = 0$.
\end{corollary}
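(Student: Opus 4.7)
The plan is to obtain this corollary immediately by contraposition of the preceding proposition. First I would observe that $\text{mdim}_\text{M}(T,\omega)$ is non-negative for $\mathbb{P}$-a.a.\ $\omega$, since for $0<\epsilon<1$ one has $-\log\epsilon>0$ and $\log\text{sep}(\omega,\epsilon,n)\ge 0$, so the ratio $S'(\omega,\epsilon)/(-\log\epsilon)$ whose $\liminf$ defines $\text{mdim}_\text{M}(T,\omega)$ is non-negative. Integrating gives $\text{mdim}_\text{M}(T)\ge 0$, hence $\text{mdim}_\text{M}(T)\ne 0$ is equivalent to $\text{mdim}_\text{M}(T)>0$.

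Now I would contrapose the previous proposition: if $h_{\text{top}}(T)<\infty$, then we cannot have $\text{mdim}_\text{M}(T)>0$, so $\text{mdim}_\text{M}(T)=0$. There is essentially nothing else to do, since the previous proposition has already carried out the real work of producing a positive-measure set on which $\lim_{\epsilon\to 0}S'(\omega,\epsilon)=\infty$ whenever the metric mean dimension is positive, and then integrating to conclude $h_{\text{top}}(T)=\infty$.

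There is no genuine obstacle here; the only small point worth flagging in the write-up is the non-negativity of $\text{mdim}_\text{M}(T)$, which ensures the statement ``$\ne 0$'' in the hypothesis of the preceding proposition coincides with ``$>0$'' and thus that the contrapositive gives exactly ``$=0$'' rather than just ``$\le 0$''. Everything else is a one-line logical inference.
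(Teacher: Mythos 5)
Your proposal is correct and matches the paper, which states this corollary as an immediate consequence (contrapositive) of the preceding proposition with no further argument. Your remark that $\text{mdim}_\text{M}(T)\ge 0$ is a harmless extra check; strictly speaking the contrapositive of ``$\text{mdim}_\text{M}(T)\ne 0 \implies h_\text{top}(T)=\infty$'' already yields ``$h_\text{top}(T)<\infty \implies \text{mdim}_\text{M}(T)=0$'' directly.
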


We conclude this section by pointing out that the mean topological dimension is not larger than the metric mean dimension for bundle RDS as well as in the deterministic case. This result will be proved if we can show that for any finite open cover $\alpha$ of $X$ and $\mathbb{P}$-a.a. $\omega$,
\[
\lim_{n\to\infty} \frac{1}{n} \mathcal{D} \big( \alpha_0^{n-1} \left( \omega \middle\vert \mathcal{E} \right) \big) \leq \text{mdim}_\text{M} (T,\omega).
\]
The proof of this theorem is almost identical with its deterministic version (see \cite{lindenstrauss2000mean}), the major change being the substitution of $T_\omega$ for $T$, and we omit the proof.

\begin{theorem}
For any bundle RDS $T$,
\[
\text{mdim}(T) \leq \text{mdim}_\text{M}(T).
\]
\end{theorem}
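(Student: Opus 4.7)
The proof reduces, as the authors note just before the statement, to the pointwise estimate
\[
\lim_{n\to\infty}\frac{1}{n}\mathcal{D}\bigl(\alpha_0^{n-1}(\omega|\mathcal{E})\bigr)\leq\text{mdim}_\text{M}(T,\omega)\qquad\text{for }\mathbb{P}\text{-a.a.\ }\omega,
\]
valid for every finite open cover $\alpha$ of $X$. Once this is in place, integrating over $\omega$ (the left side exists $\mathbb{P}$-a.s.\ and integrates to $\text{mdim}(T,\alpha)$ by the Kingman subadditive ergodic theorem invoked in Section~\ref{sec3}) and then taking the supremum over $\alpha$ completes the proof. So the whole task is the fiberwise bound.

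Fix a $\mathbb{P}$-typical $\omega$, a finite open cover $\alpha$ of $X$, and a Lebesgue number $\delta>0$ for $\alpha$; choose $\epsilon\in(0,\delta)$ (viewed as a constant positive random variable) and form the bundle Bowen metric $d^\omega_{\epsilon,n}$. A direct diameter estimate shows that any $U\subseteq\mathcal{E}_\omega$ with $d^\omega_{\epsilon,n}$-diameter strictly less than $1$ satisfies $\text{diam}_d\bigl(T^k_\omega(U)\bigr)<\epsilon<\delta$ for every $0\leq k<n$, so $U$ is contained in some element of $\alpha_0^{n-1}(\omega|\mathcal{E})$. Hence a minimal $d^\omega_{\epsilon,n}$-cover of $\mathcal{E}_\omega$ witnessing $\text{cov}(\omega,\epsilon,n)$ is itself a refinement of $\alpha_0^{n-1}(\omega|\mathcal{E})$.

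The main obstacle is that the crude bound $\mathcal{D}\leq\#\beta-1$ applied to this refinement only yields $\mathcal{D}(\alpha_0^{n-1}(\omega|\mathcal{E}))\leq\text{cov}(\omega,\epsilon,n)-1$, which after dividing by $n$ blows up like $e^{nS(\omega,\epsilon)}/n$ and carries no dimensional information. Replacing cardinality by dimension is the content of the Lindenstrauss-Weiss width-dimension machinery \cite{lindenstrauss2000mean}: subordinating a partition of unity to the minimal cover and passing to its nerve produces a continuous map from $\mathcal{E}_\omega$ into a polyhedron of dimension comparable to $\log\text{cov}(\omega,\epsilon,n)/|\log\epsilon|$ whose fibers lie inside elements of $\alpha_0^{n-1}(\omega|\mathcal{E})$; Propositions~\ref{compatible} and~\ref{lemma3} then convert this into a fiberwise bound that, after dividing by $n$ and letting $n\to\infty$, reads
\[
\lim_{n\to\infty}\frac{1}{n}\mathcal{D}\bigl(\alpha_0^{n-1}(\omega|\mathcal{E})\bigr)\leq\frac{S(\omega,\epsilon)}{|\log\epsilon|}.
\]
Because the construction is purely metric, using only the Lebesgue number of $\alpha$ and the geometry of the single fiber $(\mathcal{E}_\omega,d^\omega_{\epsilon,n})$, it transfers verbatim to the bundle setting with $T_\omega$ in place of the deterministic iterate, and no new measurability issues arise. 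Finally, taking $\liminf_{\epsilon\to 0}$ on the right-hand side recovers $\text{mdim}_\text{M}(T,\omega)$ by definition, yielding the pointwise bound.
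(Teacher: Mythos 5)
Your global reduction is the same as the paper's: prove the fiberwise bound $\lim_{n\to\infty}\frac{1}{n}\mathcal{D}\bigl(\alpha_0^{n-1}(\omega\vert\mathcal{E})\bigr)\leq \text{mdim}_\text{M}(T,\omega)$ for $\mathbb{P}$-a.a.\ $\omega$, integrate using the Kingman limit from Section \ref{sec3}, and take the supremum over $\alpha$; the paper then simply cites the deterministic argument of Lindenstrauss--Weiss with $T_\omega$ substituted for $T$. The gap is in your account of how the fiberwise bound is actually obtained. Subordinating a partition of unity to a minimal $(\omega,\epsilon,n)$-cover and mapping to its nerve produces a polyhedron whose dimension equals the \emph{order} of that cover, and minimality of the cardinality gives no control whatsoever on the order: the only a priori bound is $\text{cov}(\omega,\epsilon,n)-1$, i.e.\ exactly the crude cardinality bound you had just (correctly) discarded as useless. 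Asserting that the nerve has dimension ``comparable to $\log\text{cov}(\omega,\epsilon,n)/\vert\log\epsilon\vert$'' is asserting the hard inequality, not deriving it; the factor $1/\vert\log\epsilon\vert$ does not come out of the nerve construction.

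Moreover, the single-scale, fixed-$n$ statement your mechanism would need is false for general compact metric spaces. Take $n=1$ and a fiber containing a $k$-dimensional cube of side length $2\epsilon$: any continuous map to a polyhedron whose fibers have $d$-diameter less than $\epsilon$ must have image of dimension at least $k$ (this is precisely the Lindenstrauss--Weiss lemma on the $\epsilon$-width dimension of cubes), while $\log\text{cov}(\omega,\epsilon,1)/\vert\log\epsilon\vert$ is of order $k/\vert\log\epsilon\vert\to 0$. So no construction that uses only the minimal cover at the single scale $\epsilon$ can produce a polyhedron of the claimed dimension; the Lebesgue number of $\alpha$, i.e.\ the gap between the scales $\delta$ and $\epsilon$, and a genuinely multi-scale argument must enter, and that argument is the real content of the deterministic theorem you are invoking. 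Your proposal would be acceptable if, like the paper, it black-boxed that proof and observed that it applies fiber by fiber with $T_\omega$ in place of $T$ (no new measurability issues arise, by Section \ref{sec2}); as written, however, the central step has been replaced by a mechanism that only reproduces the bound $\mathcal{D}\bigl(\alpha_0^{n-1}(\omega\vert\mathcal{E})\bigr)\leq\text{cov}(\omega,\epsilon,n)-1$ and therefore does not prove the theorem.
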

\begin{corollary}
If $T$ has finite topological entropy, then $\text{mdim}(T) = 0$.
\end{corollary}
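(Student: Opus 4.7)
The plan is to derive this corollary as an immediate consequence of the two results that appear immediately before it in the text, without any new work. Specifically, the preceding Theorem establishes the inequality
\[
\text{mdim}(T) \leq \text{mdim}_\text{M}(T),
\]
and the earlier Corollary (which followed from the proposition that $\text{mdim}_\text{M}(T) \ne 0$ forces $h_\text{top}(T) = \infty$) gives that finite topological entropy implies $\text{mdim}_\text{M}(T) = 0$.

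So the proof I propose is just a two-line chain. First, I would invoke the finite-entropy hypothesis together with the earlier Corollary to conclude $\text{mdim}_\text{M}(T) = 0$. Then I would apply the Theorem to obtain $\text{mdim}(T) \leq \text{mdim}_\text{M}(T) = 0$. Finally, since one of the earlier Remarks records that $\text{mdim}(T) \geq 0$ (because each $\text{mdim}(T,\alpha) \geq 0$, as $\mathcal{D}$ is nonnegative for nonempty covers), we get $\text{mdim}(T) = 0$.

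There is essentially no obstacle here: the nontrivial content was packaged into the preceding Theorem (comparison of mean topological dimension with metric mean dimension in the bundle setting, whose proof was omitted by reference to the deterministic argument of Lindenstrauss–Weiss) and into the preceding Proposition relating nonzero metric mean dimension to infinite topological entropy. The only thing to double-check is that the nonnegativity of $\text{mdim}(T)$ is available, which it is from the Remark stated right after the definition of $\text{mdim}(T,\alpha)$. Hence the corollary is a clean corollary of the Theorem and requires no independent argument.
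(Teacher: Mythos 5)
Your proposal is correct and is exactly the argument the paper intends: the corollary follows immediately by combining the preceding Corollary (finite entropy implies $\text{mdim}_\text{M}(T)=0$) with the Theorem $\text{mdim}(T)\leq\text{mdim}_\text{M}(T)$ and the nonnegativity of $\text{mdim}(T)$.
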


% SECTION THE SMALL BOUNDARY PROPERTY
\section{The small boundary property}\label{sec4}

\begin{definition}
Let $T$ be a bundle RDS, and $E$ a Borel subset of $X$. We define the $\omega$-orbit capacity of the set $E$ to be \[
\text{ocap}^\omega (E) = \lim_{n\to\infty} \frac{1}{n} \sup_{x\in\mathcal{E}_\omega} \sum_{i=0}^{n-1} 1_E (T^i_\omega x).
\]
A set $E\subset X$ will be called $T$-small (or simply small) if $\text{ocap}^\omega (E)=0, \mathbb{P}$-a.s.
\end{definition}
Let $b_n(\omega)=\sup\{\sum_{i=0}^{n-1} 1_E(T^i_\omega x):x\in\mathcal{E}_\omega\}$. We remark that the limit above exists for a.a. $\omega$ by Kingman's subadditive ergodic theorem since $b_n(\omega)\in\mathbb{L}^1(\Omega,\mathcal{F},\mathbb{P})$ and $b_{n+m}(\omega) \leq b_n(\omega) + b_m(\vartheta^n \omega)$.
So then for any Borel set $E\subset X$, $\omega\mapsto \text{ocap}^\omega (E)$ is measurable.

\begin{definition}
A bundle RDS $T$ has the small boundary property (SBP) if every point $x\in X$ and every open subset $U$ that contains $x$ there is a neighborhood $V \subset U$ of $x$ with small boundary.
\end{definition}

Let $\pi_\Omega$ be the canonical projection from $\Omega\times X$ onto $\Omega$.
Let $\mathcal{P}_{\mathbb{P}}(\Omega\times X)$ be the space of probability measures on $\Omega\times X$ having the marginal $\mathbb{P}$ on $\Omega$
and set $\mathcal{P}_{\mathbb{P}}(\mathcal{E})=\{\mu \in \mathcal{P}_{\mathbb{P}}(\Omega\times X): \mu(\mathcal{E})=1\}$.
Denote by $\mathcal{I}_{\mathbb{P}}(\mathcal{E})$ the space of all $\Theta$-invariant measures in $\mathcal{P}_{\mathbb{P}}(\mathcal{E})$.
Any $\mu \in \mathcal{I}_{\mathbb{P}}(\mathcal{E})$ on $\mathcal{E}$ disintegrates $\mathrm{d}\mu(\omega,x)=\mathrm{d}\mu_\omega(x)\mathrm{d}\mathbb{P}(\omega)$ (see \cite{dudley2002real}),
where $\mu_\omega$ are regular conditional probabilities with respect to the sub $\sigma$-algebra $\mathcal{F}_\mathcal{E}$ formed by all sets $(A\times X)\cap \mathcal{E}$ with $A\in \mathcal{F}$.
This means that $\mu_\omega$ is a probability measure on $\mathcal{E}_\omega$ for $\mathbb{P}$-a.a. $\omega$ and for any measurable set $R\subset \mathcal{E}$, $\mathbb{P}$-a.s.
$\mu_\omega(R_\omega)=\mu \left( R \middle\vert \mathcal{F}_\mathcal{E} \right)$, where $R_\omega = \{x:(\omega,x)\in R \}$, and so $\mu(R) = \int \mu_\omega(R_\omega) \mathrm{d}\mathbb{P}(\omega)$.
\begin{proposition}
For closed $E\subset X$, we have \[\mathbb{E} \text{ocap}^\omega (E) = \sup \{\mu (\Omega\times E) : \mu\in\mathcal{I}_\mathbb{P}(\mathcal{E})\}.\]
\end{proposition}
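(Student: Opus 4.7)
The plan is to establish the two inequalities of the variational identity separately. The upper bound $\mu(\Omega\times E)\leq\mathbb{E}\,\text{ocap}^\omega(E)$ for any $\mu\in\mathcal{I}_\mathbb{P}(\mathcal{E})$ follows quickly from $\Theta$-invariance. Using $\Theta^{-i}(\Omega\times E)=\{(\omega,x)\in\mathcal{E}:T_\omega^i x\in E\}$ and disintegrating against $\mathbb{P}$ gives
\[\mu(\Omega\times E)=\frac{1}{n}\sum_{i=0}^{n-1}\mu(\Theta^{-i}(\Omega\times E))=\int\frac{1}{n}\sum_{i=0}^{n-1}\int 1_E(T^i_\omega x)\,\mathrm{d}\mu_\omega(x)\,\mathrm{d}\mathbb{P}(\omega).\]
The inner expression is bounded pointwise by $b_n(\omega)/n\leq 1$, so dominated convergence together with the almost sure convergence $b_n(\omega)/n\to\text{ocap}^\omega(E)$ (noted after the definition) yields the desired estimate.

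For the reverse inequality, I would build an approximately maximizing sequence of measures via an empirical construction on the skew product. Because $E$ is closed and $\mathcal{E}_\omega$ is compact, the function $g_n(\omega,x):=\sum_{i=0}^{n-1}1_E(T^i_\omega x)$ is upper semicontinuous in $x$ and the supremum defining $b_n(\omega)$ is attained; a Kuratowski--Ryll-Nardzewski type measurable selection furnishes a measurable $\omega\mapsto x_n(\omega)\in\mathcal{E}_\omega$ with $g_n(\omega,x_n(\omega))=b_n(\omega)$. Set
\[\mu_n:=\int\frac{1}{n}\sum_{i=0}^{n-1}\delta_{\Theta^i(\omega,x_n(\omega))}\,\mathrm{d}\mathbb{P}(\omega)\in\mathcal{P}_\mathbb{P}(\mathcal{E}).\]
Since $\vartheta$ preserves $\mathbb{P}$, each $\mu_n$ has $\Omega$-marginal $\mathbb{P}$, and directly from the construction $\mu_n(\Omega\times E)=n^{-1}\mathbb{E}\,b_n(\omega)\to\mathbb{E}\,\text{ocap}^\omega(E)$.

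I would then extract a limit $\mu$ of $\{\mu_n\}$ in the narrow topology on $\mathcal{P}_\mathbb{P}(\mathcal{E})$ used by Kifer, where convergence is tested against integrands $f(\omega,x)$ continuous in $x$ with $\omega\mapsto\|f(\omega,\cdot)\|_\infty\in L^1(\mathbb{P})$; under the standing hypotheses this space is compact (Crauel). Invariance of $\mu$ follows from the telescoping estimate
\[\Big|\int f\,\mathrm{d}\Theta_*\mu_n-\int f\,\mathrm{d}\mu_n\Big|\leq\frac{2}{n}\mathbb{E}\|f(\omega,\cdot)\|_\infty\to 0,\]
which passes to the limit and gives $\Theta_*\mu=\mu$, hence $\mu\in\mathcal{I}_\mathbb{P}(\mathcal{E})$. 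Finally, the closedness of $E$ is used to approximate $1_E$ from above by continuous $f_k\downarrow 1_E$ on $X$; narrow convergence $\int f_k\,\mathrm{d}\mu_n\to\int f_k\,\mathrm{d}\mu$ combined with monotone convergence in $k$ produces the portmanteau-type bound $\mu(\Omega\times E)\geq\limsup_n\mu_n(\Omega\times E)=\mathbb{E}\,\text{ocap}^\omega(E)$, closing the argument.

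The main obstacle is organizing the limiting step on $\mathcal{P}_\mathbb{P}(\mathcal{E})$: since $\Omega$ carries no topology, ordinary weak convergence on the product is unavailable, and one must work with the fiberwise narrow topology of random measures and verify that both compactness and the portmanteau inequality for the cylindrical set $\Omega\times E$ remain valid in that framework. Once the topological setup is secured, the argument is a faithful transcription to the bundle setting of the classical empirical-measure proof, with closedness of $E$ playing its usual double role in the existence of a maximizing measurable selection and in the final upper-semicontinuity step.
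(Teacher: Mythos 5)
Your proof is correct and follows essentially the same route as the paper's: the invariance-plus-disintegration argument for the upper bound, and for the lower bound a measurable selection of fiberwise (near-)maximizers, the empirical measures $\frac{1}{n}\sum_{i=0}^{n-1}\nu^{(n)}\circ\Theta^{-i}$, and a weak limit point whose invariance is obtained from the same telescoping estimate. The only deviations are cosmetic: you select exact maximizers of $b_n(\omega)$ (using upper semicontinuity of $\sum_i 1_E\circ T^i_\omega$ on the compact fibers) instead of the paper's $\epsilon$-approximate selections on the sets $A_n$, and you spell out the portmanteau step $\mu(\Omega\times E)\geq\limsup_n\mu_n(\Omega\times E)$ for the closed cylinder, which the paper leaves implicit.
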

\begin{proof}
Let $\mu\in\mathcal{I}_\mathbb{P}(\mathcal{E})$, for any $\epsilon > 0$ and large enough $n$,
\begin{equation*} \begin{split}
\mu (\Omega\times E) & = \int 1_{\Omega\times E}(\omega,x) \mathrm{d}\mu \\
& = \frac{1}{n} \int \sum_{i=0}^{n-1} 1_{\Omega\times E}(\Theta^i (\omega,x))\mathrm{d}\mu \\
& = \frac{1}{n} \iint \sum_{i=0}^{n-1} 1_{\Omega\times E}(\Theta^i (\omega,x))\mathrm{d}\mu_\omega \mathrm{d}\mathbb{P} \\
& = \mathbb{E} \Big( \frac{1}{n} \int \sum_{i=0}^{n-1} 1_E (T^i_\omega x) \mathrm{d}\mu_\omega \Big) \\
& < \mathbb{E} \text{ocap}^\omega (E) + \epsilon
\end{split}
\end{equation*}
Since $\epsilon$ is arbitrary, $\mu (\Omega\times E) \leq \mathbb{E} \text{ocap}^\omega (E)$;

Conversely, for any $\epsilon>0$, consider the map $\Gamma_{\epsilon,n} : \Omega \to \mathscr{P}_k$ defined as \[ \Gamma_n (\omega) = \left\{ x\in\mathcal{E}_\omega : \frac{1}{n} \sum_{i=0}^{n-1} 1_E (T^i_\omega x) > \text{ocap}^\omega (E) - \epsilon \right\}. \] Note that if there exists $x\in \mathcal{E}_\omega$ such that $x\notin \Gamma_n(\omega)$, then there is an open neighborhood $U$ of $x$ such that for all $y\in U$, $y\notin \Gamma_n (\omega)$. To see this consider that among the points $x, T_\omega x, \dots, T^{n-1}_\omega x$ which misses the closed set $E$, denoted by $T^i_\omega x, i\in I'$. For any $i\in I'$, since $T_\omega$ is continuous there is some open neighborhood $U_i$ of $x$ such that $T^i_\omega U_i$ misses $E$ as well. Then $U=\cap_{i\in I'} U_i$ will satisfy our requirements, so then $\Gamma_n$ is closed (compact) valued.

We assume that $\text{ocap}^\omega (E)$ exists and be finite on a set $A\subset \Omega$ with full measure. Observe that the graph of $\Gamma_n$, \[
G_n = \left\{ (\omega,x)\in\mathcal{E} : \frac{1}{n} \sum_{i=0}^{n-1} 1_E (T^i_\omega x) > \text{ocap}^{\omega} (E) - \epsilon  \right\} \in \mathcal{F}\otimes\mathcal{B},
\]
so $\pi_\Omega G_n \in \mathcal{F}$ and \[
\bigcup_{n=1}^{\infty} \bigcap_{m\geq n} \pi_\Omega G_m = A.
\]
Set $\cap_{m\geq n} \pi_\Omega G_m = A_{n}$. Applying selection theorem (see \cite{castaing2006convex}) on $\Gamma_n \vert_{A_{n}}$, there exists an $\mathcal{F}\cap A_{n}, \mathcal{B}$ measurable function $\gamma_{n} : A_{n}\to X$, $\gamma_{n}(\omega)\in\Gamma_n (\omega)\subset \mathcal{E}_\omega$ such that \[
\frac{1}{n} \sum_{i=0}^{n-1} 1_E (T^i_\omega \gamma_{n}(\omega)) > \text{ocap}^\omega (E) - \epsilon, \qquad \omega \in A_{n}.
\]
For $\omega \notin A_{n}$, applying selection theorem again on the map $\omega\mapsto\mathcal{E}_\omega$, there exists an measurable function $e(\omega)\in\mathcal{E}_\omega$. Now we can define probability measures $\nu^{(n)}$ on $\mathcal{E}$ via their measurable disintegrations \[
\nu_{\omega}^{(n)} = \delta_{\zeta_n(\omega)} \text{ where } \zeta_n (\omega) = \begin{cases}
{\gamma_n (\omega)}, & \omega \in A_{n} \\
{e (\omega)}, & \omega \notin A_{n},
\end{cases}
\]
so that $ \mathrm{d} \nu^{(n)} (\omega,x) = \mathrm{d}\nu^{(n)}_\omega (x) \mathrm{d}\mathbb{P}(\omega)$
and set
\[
\mu^{(n)} = \frac{1}{n} \sum_{i=0}^{n-1} \nu^{(n)} \circ \Theta^{-i}.
\]
Then any limit point of $\mu_{n}$ for $n\to\infty$ in the topology of weak convergence is in $\mathcal{I}_\mathbb{P} (\mathcal{E})$. In fact, suppose $\mu_{n}\to \mu \in \mathcal{P}_\mathbb{P} (\mathcal{E})$,then for any $f\in \mathbb{L}^1 (\Omega,C(X))$, we have
\begin{equation*}\begin{split}
\bigg\vert \int f\circ\Theta \mathrm{d}\mu - \int f\mathrm{d}\mu \bigg\vert & = \lim_{n\to\infty} \bigg\vert \int f\circ\Theta\mathrm{d} \mu^{(n)} - \int f\circ\mathrm{d}\mu^{(n)}\bigg\vert \\
& = \lim_{n\to\infty} \frac{1}{n} \bigg\vert \int (f\circ\Theta^n - f)\mathrm{d}\nu^{(n)} \bigg\vert \\
& \leq \lim_{n\to\infty} \frac{2}{n} \Vert f \Vert = 0.
\end{split}
\end{equation*}
So $\mu\in\mathcal{I}_{\mathbb{P}}(\mathcal{E})$. Since for any $0\leq i < n$,
\begin{align*}
\nu^{(n)} \big( \Theta^{-i} (\Omega\times E) \big)
& = \int 1_{\Omega\times E} \big( \Theta^i (\omega,x) \big) \mathrm{d}\nu_\omega^{(n)}(x)\mathrm{d}\mathbb{P}(\omega) \\
& = \int_{\Omega} 1_E \big( T^i_\omega \zeta_n (\omega) \big) \mathrm{d}\mathbb{P}(\omega),
\end{align*}
we see that when $n$ is large enough,
\begin{equation*}\begin{split}
\mu^{(n)}(\Omega\times E)
& = \frac{1}{n} \sum_{i=0}^{n-1} \nu^{(n)} \big( \Theta^{-i}(\Omega\times E) \big) \\
%& = \frac{1}{n} \sum_{i=0}^{n-1} \int 1_{\Omega\times E} \big( \Theta^i (\omega,x) \big) \mathrm{d}\nu_\omega^{(n)}(x)\mathrm{d}\mathbb{P}(\omega) \\
& = \int_{\Omega} \frac{1}{n} \sum_{i=0}^{n-1} 1_E \big(T^i_\omega \zeta_n (\omega)\big) \mathrm{d}\mathbb{P}(\omega) \\
& \geq \int_{A_{n}} \frac{1}{n} \sum_{i=0}^{n-1} 1_E \big(T^i_\omega \gamma_n (\omega)\big) \mathrm{d}\mathbb{P}(\omega) \\
& > \int_{A_{n}} \text{ocap}^{\omega}(E) \mathrm{d}\mathbb{P}(\omega) - \epsilon \mathbb{P} (A_n) \\
& \geq \int_{\Omega} \text{ocap}^{\omega}(E) \mathrm{d}\mathbb{P}(\omega) - \epsilon - \int_{\Omega\backslash A_{n}} \text{ocap}^{\omega}(E) \mathrm{d}\mathbb{P}(\omega).
\end{split}
\end{equation*}
Since $\epsilon$ is arbitrary and $\text{ocap}^{\omega}(E)$ is integrable, by the absolute continuity of integral we have \[\mathbb{E} \text{ocap}^\omega (E) = \sup \{\mu (\Omega\times E) : \mu\in\mathcal{I}_\mathbb{P}(\mathcal{E})\}.\]
\end{proof}

\begin{corollary}
For closed set $E\subset X$, $E$ is $T$-small $\iff \Omega\times E$ is $\mu$-null set for all $\mu \in \mathcal{P}_\mathbb{P}(\mathcal{E})$.
\end{corollary}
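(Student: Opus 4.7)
The plan is to deduce the corollary as a one-line consequence of the preceding proposition, combined with the obvious nonnegativity of both sides. The key observation is that $\tfrac{1}{n}\sum_{i=0}^{n-1}1_E(T^i_\omega x)\ge 0$ for every $x\in\mathcal{E}_\omega$, every $n$, and every $\omega$, so the $\omega$-orbit capacity $\text{ocap}^\omega(E)$ is a nonnegative measurable function of $\omega$ wherever it is defined. Consequently, $E$ is $T$-small, i.e.\ $\text{ocap}^\omega(E)=0$ for $\mathbb{P}$-almost every $\omega$, if and only if $\mathbb{E}\,\text{ocap}^\omega(E)=0$.

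Next, I would invoke the preceding proposition, which identifies this expectation as
\[
\mathbb{E}\,\text{ocap}^\omega(E)=\sup\{\mu(\Omega\times E):\mu\in\mathcal{I}_\mathbb{P}(\mathcal{E})\}.
\]
Since $\mu(\Omega\times E)\ge 0$ for every $\mu$, a supremum of nonnegative quantities vanishes precisely when each individual quantity vanishes; hence $\mathbb{E}\,\text{ocap}^\omega(E)=0$ if and only if $\mu(\Omega\times E)=0$ for every $\mu\in\mathcal{I}_\mathbb{P}(\mathcal{E})$. Chaining the two equivalences yields the corollary.

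I should flag that the statement as written quantifies over $\mathcal{P}_\mathbb{P}(\mathcal{E})$, but only the invariant sub-class $\mathcal{I}_\mathbb{P}(\mathcal{E})$ is what the preceding proposition characterises, and this is visibly what is meant (a non-invariant $\mu$ can easily charge $\Omega\times E$ even when $E$ is small, simply by placing all its mass there along one fibre). Interpreting the statement in this way, there is essentially no obstacle: the argument is just combining nonnegativity with the previous proposition. If instead one insisted on the literal $\mathcal{P}_\mathbb{P}(\mathcal{E})$ quantifier, the reverse implication would fail, so the only substantive issue is cosmetic, namely correcting the class of measures.
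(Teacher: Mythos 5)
Your argument is correct and is exactly the intended deduction: the paper states this corollary without proof as an immediate consequence of the preceding proposition, obtained just as you do from the nonnegativity of $\text{ocap}^\omega(E)$ (so that $T$-smallness is equivalent to $\mathbb{E}\,\text{ocap}^\omega(E)=0$) and of $\mu(\Omega\times E)$ (so that the supremum vanishes iff each term does). Your flag that $\mathcal{P}_\mathbb{P}(\mathcal{E})$ should read $\mathcal{I}_\mathbb{P}(\mathcal{E})$ is also well taken: the literal quantifier over all non-invariant measures would make the forward implication false in general, and the same slip reappears in the paper's subsequent corollary on unique ergodicity.
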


Recall that ergodic measures for RDS are ergodic measures for the skew product transformations and the ergodicity of the base system $(\Omega,\mathcal{F},\mathbb{P},\vartheta)$ is a necessary condition for the existence of an ergodic invariant measure for $\Theta$ (see \cite{crauel2003random}).

\begin{corollary}
If $\mathbb{P}$ is $\vartheta$-ergodic and $T$ is uniquely ergodic, then $T$ has the SBP.
\end{corollary}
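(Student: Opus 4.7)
The plan is to combine the unique-ergodicity hypothesis with the preceding corollary characterizing $T$-small closed sets, and then use a standard pigeonhole/disjointness argument on the one-parameter family of metric spheres around a point.

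First, I would unpack what the hypotheses give. Unique ergodicity of $T$ means $\mathcal{I}_\mathbb{P}(\mathcal{E}) = \{\mu\}$ for a single $\Theta$-invariant measure $\mu$ (its existence being guaranteed since $\mathbb{P}$ is $\vartheta$-ergodic). By the preceding corollary, a closed set $E \subset X$ is $T$-small if and only if $\mu(\Omega \times E) = 0$. Thus to verify the SBP it suffices, given $x \in X$ and an open $U \ni x$, to produce a neighborhood $V$ of $x$ with $V \subset U$ and $\mu(\Omega \times \partial V) = 0$.

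The construction uses metric balls. Fix $r_0 > 0$ small enough that the open ball $B(x, r_0) \subset U$, and consider the uncountable family
\[
\{\Omega \times \partial B(x, r) : 0 < r < r_0\}.
\]
For $r \neq r'$ the boundaries $\partial B(x, r)$ and $\partial B(x, r')$ are contained in the disjoint metric spheres $\{y : d(x, y) = r\}$ and $\{y : d(x, y) = r'\}$, so the sets in this family are pairwise disjoint measurable subsets of $\Omega \times X$. Since $\mu$ is a finite (probability) measure, at most countably many of them have positive $\mu$-measure. Therefore I can choose some $r \in (0, r_0)$ with $\mu(\Omega \times \partial B(x, r)) = 0$, set $V = B(x, r)$, and conclude that $\partial V$ is a closed $T$-small set, yielding the SBP.

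This argument is essentially routine once the preceding corollary is in hand; the only genuine point of care is verifying that I am really allowed to invoke that corollary with a single measure. The one potential obstacle is making sure the characterization in the corollary really applies (i.e.\ interpreting the statement with $\mathcal{I}_\mathbb{P}(\mathcal{E})$ as provided by the proposition), and confirming that unique ergodicity as used here refers to $|\mathcal{I}_\mathbb{P}(\mathcal{E})| = 1$ rather than to a stronger or weaker notion. Beyond that, everything reduces to the elementary pigeonhole on a finite measure applied to an uncountable family of disjoint sets.
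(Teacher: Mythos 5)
Your proof is correct and takes essentially the same route as the paper: invoke the unique invariant measure $\mu$ together with the preceding corollary characterizing closed $T$-small sets as those $E$ with $\mu(\Omega\times E)=0$, then apply the pigeonhole argument to the pairwise disjoint family of metric spheres around $x$ to obtain arbitrarily small balls inside $U$ with $\mu$-null, hence small, boundary. Your remark about reading the corollary with $\mathcal{I}_\mathbb{P}(\mathcal{E})$ (rather than the paper's $\mathcal{P}_\mathbb{P}(\mathcal{E})$, apparently a typo) is exactly the intended interpretation.
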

\begin{proof}
Suppose $\mathcal{P}_\mathbb{P}(\mathcal{E}) = \{\mu\}$, then for any $x\in X$, the set
\[
\left\{ r>0 : \{y:d(x,y)=r\} \text{ is not small}\right\} = \left\{ r>0 : \mu\{y:d(x,y)=r\}\ne 0 \right\}
\]
is at most countable, and so there are many arbitrary small balls around $x$ with small boundary, and $T$ has the SBP.
\end{proof}

According to the definition of the SBP, as a consequence the following result relate the orbit capacity to a partitions of unity subordinate to an open cover $\alpha$ and allow us to define an $\alpha^{n-1}_0 \left(\omega\middle\vert\mathcal{E}\right)$-compatible map in the next theorem.

\begin{proposition}
If $T$ has the SBP, then for every finite open cover $\alpha$ of $X$, every $\epsilon>0$ and $\mathbb{P}$-a.a. $\omega$ there exists $N_0 (\omega,\epsilon)>0$ such that for any $N>N_0 (\omega,\epsilon)$, there is a subordinate partition of unity $\phi_j^\omega : X\to [0,1]$ with respect to $\alpha$ such that
\begin{equation}\label{n1}
\frac{1}{N}\sum_{i=0}^{N-1} 1_{A_\omega} (T^i_\omega x) < \epsilon, \text{ where } A_\omega = \bigcup_{j=1}^{\#\alpha} (\phi^\omega_j)^{-1} (0,1)
\end{equation}
\end{proposition}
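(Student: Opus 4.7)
The plan is to use the SBP to produce a finite open refinement $\{V_k\}$ of $\alpha$ with small $\partial V_k$, form the closed small set $F=\bigcup_k\partial V_k$, construct an ``ordered Urysohn'' partition of unity subordinate to $\alpha$ whose transition region $A_\omega$ lies inside the closed $\delta$-neighborhood $F_\delta=\{x:d(x,F)\le\delta\}$, and then choose $\delta=\delta(\omega,\epsilon)$ small enough that $\text{ocap}^\omega(F_\delta)<\epsilon/2$.

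For the refinement, using compactness and normality first cover $X$ by open sets $W_i$ with $\overline{W_i}\subset A^{(i)}$, and for each $x\in X$ apply the SBP inside the relevant $W_i$ containing $x$ to obtain an open $V_x$ with $\partial V_x$ small and $\overline{V_x}\subset A^{(j(x))}$; extract a finite subcover $\{V_k\}_{k=1}^m$ by compactness of $X$, together with the index assignment $j_k$. Since finite unions of small sets are small (the defining supremum inside $\text{ocap}^\omega$ is subadditive in the set), $F=\bigcup_k\partial V_k$ is closed and small. I would then show $\text{ocap}^\omega(F_\delta)\downarrow 0$ for $\mathbb{P}$-a.a.\ $\omega$ as $\delta\downarrow 0$: for each fixed $n$, the map $x\mapsto\sum_{i=0}^{n-1}1_{F_\delta}(T^i_\omega x)$ is upper semicontinuous on the compact fiber $\mathcal{E}_\omega$, decreases in $\delta$, and converges pointwise to $\sum_{i=0}^{n-1}1_F$, so a standard compactness argument gives $b_n^{F_\delta}(\omega)\downarrow b_n^F(\omega)$; letting $\delta\to 0$ in $\text{ocap}^\omega(F_\delta)\le n^{-1}b_n^{F_\delta}(\omega)$ and then taking $\inf_n$ together with $\text{ocap}^\omega(F)=0$ completes this step.

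For the partition of unity, pick $\delta=\delta(\omega,\epsilon)$ small enough that $\text{ocap}^\omega(F_\delta)<\epsilon/2$ and that the ``deep interiors'' $\{x\in V_k:d(x,\partial V_k)\ge\delta\}$ still cover $X$ as $k$ varies (the latter holding for all $\delta$ below a positive constant depending only on the cover, by lower semicontinuity and strict positivity of $x\mapsto\max_{k:x\in V_k}d(x,\partial V_k)$ on the compact $X$). Put
\[
\chi_k(x)=\begin{cases}\min\bigl(1,d(x,\partial V_k)/\delta\bigr),&x\in V_k,\\ 0,&x\notin V_k,\end{cases}
\]
which is continuous on $X$ (the value matches $0$ as $x$ approaches $\partial V_k$ from $V_k$), supported in $\overline{V_k}\subset A^{(j_k)}$, equal to $1$ on the deep interior, and in $(0,1)$ only on $\{x\in V_k:0<d(x,\partial V_k)<\delta\}\subset F_\delta$. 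Set $\tilde\chi_k=\chi_k\prod_{i<k}(1-\chi_i)$; the telescoping identity $\sum_k\tilde\chi_k=1-\prod_i(1-\chi_i)$ and the covering property of the deep interiors give $\sum_k\tilde\chi_k\equiv 1$, so $\phi_j^\omega=\sum_{k:j_k=j}\tilde\chi_k$ is a continuous partition of unity subordinate to $\alpha$. At any $x$ with $\chi_k(x)\in\{0,1\}$ for every $k$, only the single term $\tilde\chi_{k^*}$ with $k^*=\min\{k:\chi_k(x)=1\}$ is nonzero (equal to $1$), so $\phi_j^\omega(x)\in\{0,1\}$; hence $A_\omega\subset\bigcup_k\chi_k^{-1}(0,1)\subset F_\delta$.

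The conclusion is then immediate from the definition of $\text{ocap}^\omega$: the bound $\text{ocap}^\omega(F_\delta)<\epsilon/2$ supplies $N_0(\omega,\epsilon)$ with $\frac{1}{N}\sup_{x\in\mathcal{E}_\omega}\sum_{i=0}^{N-1}1_{F_\delta}(T^i_\omega x)<\epsilon$ for $N>N_0$, and $1_{A_\omega}\le 1_{F_\delta}$ delivers \eqref{n1}. I expect the main obstacle to be in the partition-of-unity construction: a naive Urysohn partition subordinate to $\{V_k\}$ would place the transition region on the overlaps $V_k\cap V_{k'}$ across different members of $\alpha$, which are not confined to a neighborhood of $F$; the ``first nonzero index'' telescoping $\tilde\chi_k=\chi_k\prod_{i<k}(1-\chi_i)$ is precisely the device that collapses those overlap contributions while preserving continuity and subordination, thereby pushing $A_\omega$ into a neighborhood of $\bigcup_k\partial V_k$.
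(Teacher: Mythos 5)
Your overall architecture---an SBP refinement with small boundaries, a closed small set $F=\bigcup_k\partial V_k$, distance bump functions at scale $\delta$, and the first-nonzero-index telescoping that confines the transition set $A_\omega$ to the $\delta$-neighborhood of $F$---is essentially the paper's construction, and those parts are sound. The genuine gap is the step where you fix a single $\delta=\delta(\omega,\epsilon)$ with $\text{ocap}^\omega(F_\delta)<\epsilon/2$. Your justification rests on the inequality $\text{ocap}^\omega(F_\delta)\le n^{-1}b_n^{F_\delta}(\omega)$ for each fixed $n$, i.e.\ on the limit in the definition of orbit capacity being the infimum of $n^{-1}b_n(\omega)$ at a fixed $\omega$. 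That is a Fekete-type fact valid in the deterministic case, but for a bundle RDS the subadditivity is only $b_{n+m}(\omega)\le b_n(\omega)+b_m(\vartheta^n\omega)$, so Fekete's lemma does not apply fiberwise and the a.s.\ limit need not be dominated by any single $n^{-1}b_n(\omega)$. A two-point example: $\Omega=\{0,1\}$ with $\vartheta$ the swap and $\mathbb{P}$ uniform, $X=\{a,b\}$, $\mathcal{E}_0=\{b\}$, $\mathcal{E}_1=\{a\}$, $T_0b=a$, $T_1a=b$, $E=\{a\}$; then $b_1(0)=0$ while $\text{ocap}^0(E)=1/2$. Hence your interchange of the limits $\delta\to0$ and $n\to\infty$ is not justified as written, and with it the existence of an $N$-independent $\delta$.

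Note, however, that the statement does not require $\delta$ to be independent of $N$: the quantifiers allow the partition of unity (hence $\delta$) to depend on $N$, and this is exactly how the paper argues. Smallness of the boundaries gives, for $\mathbb{P}$-a.a.\ $\omega$, an $N_0(\omega,\epsilon)$ with $N^{-1}b_N^{F}(\omega)<\epsilon/2$ for all $N>N_0$; then, for each fixed $N>N_0$, your own upper-semicontinuity/compactness observation (which is correct at fixed $N$, and is the paper's pointwise-plus-compactness argument on $\mathcal{E}_\omega$) produces $\delta=\delta(\omega,\epsilon,N)$ with $N^{-1}\sup_{x\in\mathcal{E}_\omega}\sum_{i=0}^{N-1}1_{F_\delta}(T^i_\omega x)<\epsilon$, and the remainder of your partition-of-unity construction goes through verbatim with this $\delta$. (If you insist on an $N$-independent $\delta$, the claim $\text{ocap}^\omega(F_\delta)\downarrow0$ a.s.\ would need a different proof, e.g.\ via the formula $\mathbb{E}\,\text{ocap}^\omega(E)=\sup\{\mu(\Omega\times E):\mu\in\mathcal{I}_\mathbb{P}(\mathcal{E})\}$ for closed $E$ together with compactness of $\mathcal{I}_\mathbb{P}(\mathcal{E})$ and monotonicity in $\delta$; that is not the argument you gave.)
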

\begin{proof}
Let $\alpha = (U_j)$. First find (using the SBP) a cover of $X$ by open sets with small boundary that refines $\alpha$, and by taking unions of these sets it is possible to find a refinement $\alpha'\succ\alpha$ such that there is a one-to-one correspondence between the elements $U_j$ of $\alpha$ and $U_j {'}$ of $\alpha {'}$ with $U_j {'}\subset U_j$.
Since $U_j '$ has small boundary, for $\mathbb{P}$-a.a. $\omega$, $\epsilon>0$ and big enough $N>N_0 (\omega,\epsilon)$, we have
\begin{equation}\label{n2}
\frac{1}{N} \sum_{i=0}^{N-1} 1_{\partial U_j '} (T^i_\omega x) < \frac{\epsilon}{\#\alpha}, \qquad x\in\mathcal{E}_\omega.
\end{equation}
Fixed $\omega$ and $x_0 \in \mathcal{E}_\omega$ which satisfied \eqref{n2}, consider that among the points $x_0,$ $T_\omega x_0,$ $\dots,$ $T^{n-1}_\omega x_0$ which misses the closed set $\partial U_j '$, denoted by $T^i_\omega x_0, i\in I'$. There exists a $\delta_0 = \delta(x_0) > 0$ such that $T^i_\omega x_0, i\in I'$ will miss $B(\partial U_j ',\delta_0)$ as well, where $B(F,r) = \{ x : d(x,F)<r\}$. By taking smaller $\delta_0$, one can replace the open ball $B(\partial U_j ',\delta_0)$ by closed ball $\overline{B}(\partial U_j ',\delta_0)$. So we have
\[
\frac{1}{N} \sum_{i=0}^{N-1} 1_{\overline{B}(\partial U_j ',\delta_0)} (T^i_\omega x_0) < \frac{\epsilon}{\#\alpha}.
\]
Again consider that among the points $x_0, T_\omega x_0, \dots, T^{n-1}_\omega x_0$ which misses the closed set $\overline{B}(\partial U_j ',\delta_0)$, denoted by $T^i_\omega x_0, i\in I''$.
For any $i\in I''$, since $T^i_\omega$ is continuous there is some open neighborhood $N_i (x_0)$ such that $T^i_\omega N_i (x_0)\cap \overline{B}(\partial U_j ',\delta_0) = \emptyset, i\in I''$.
Let $N(x_0)=\cap_{i\in I''} N_i (x_0)$. We have
\[
\frac{1}{N} \sum_{i=0}^{N-1} 1_{B(\partial U_j ',\delta_0)} (T^i_\omega y) < \frac{\epsilon}{\#\alpha}, \qquad y\in N(x_0).
\]

Let $x_0$ run over $\mathcal{E}_\omega$, since $\mathcal{E}_\omega$ is covered by $\cup_{x_0 \in \mathcal{E}_\omega} N(x_0)$, by the compactness of $\mathcal{E}_\omega$ there are finite many $N(x_k)$ which could cover $\mathcal{E}_\omega$. Let $\delta = \min \{\delta(x_k)\}$, then
\[
\frac{1}{N} \sum_{i=0}^{N-1} 1_{B(\partial U_j ',\delta)} (T^i_\omega x) < \frac{\epsilon}{\#\alpha}, \qquad x\in \mathcal{E}_\omega.
\]
We also take $\delta$ small enough so that $B(\partial U_j ',\delta) \subset U_j$ for all $j$. Now take \[
\psi_j^\omega (x) = \begin{cases}
1, & x \in U_j ' \\
\max \{0,1-\frac{1}{\delta} d(x,\partial U_j ')\}, & x \notin U_j '.
\end{cases}
\]
We define the function $\phi_j^\omega (x)$ as follows:
\begin{equation*} \begin{split}
\phi_1^\omega (x) & = \psi_1^\omega (x) \\
\phi_2^\omega (x) & = \min \{ \psi_2^\omega (x), 1-\phi_1^\omega (x)\} \\
\phi_3^\omega (x) & = \min \{ \psi_3 (x)^\omega, 1-\psi_1^\omega (x)-\psi_2^\omega (x)\} \\
\vdots &
\end{split}
\end{equation*}
These clearly satisfy the required conditions.
\end{proof}

\begin{theorem}
If $T$ has the SBP, then $\text{mdim}(T)=0$.
\end{theorem}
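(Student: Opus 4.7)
The plan is to fix a finite open cover $\alpha$ of $X$ with $l := \#\alpha$ and show $\text{mdim}(T,\alpha) = 0$; taking the supremum over $\alpha$ then yields $\text{mdim}(T)=0$. By the subadditive ergodic theorem discussion preceding the definition of $\text{mdim}(T,\alpha)$, one has $\text{mdim}(T,\alpha) = \mathbb{E} q$ where
$$q(\omega) := \lim_{n\to\infty} \frac{1}{n}\mathcal{D}\big(\alpha_0^{n-1}\left(\omega\middle\vert\mathcal{E}\right)\big),$$
so it suffices to prove $q = 0$ almost surely. Fix $\epsilon > 0$. By the previous proposition, on a $\mathbb{P}$-full set one may choose, for every $N > N_0(\omega,\epsilon)$, a partition of unity $\phi^\omega = (\phi_1^\omega,\ldots,\phi_l^\omega)$ subordinate to $\alpha$ such that $\frac{1}{N}\sum_{i=0}^{N-1} 1_{A_\omega}(T_\omega^i x) < \epsilon$ for every $x \in \mathcal{E}_\omega$.

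Define a continuous map $\Phi_N^\omega : \mathcal{E}_\omega \to (\Delta^{l-1})^N$, with $\Delta^{l-1}$ the standard $(l-1)$-simplex, by
$$\Phi_N^\omega(x) = \big(\phi^\omega(x),\,\phi^\omega(T_\omega x),\,\ldots,\,\phi^\omega(T_\omega^{N-1} x)\big).$$
Because $\{\phi_j^\omega > 0\} \subset U_j$, for any $y \in \Phi_N^\omega(\mathcal{E}_\omega)$ one may pick $j_i$ so that the $j_i$-coordinate of $y_i$ is positive (such $j_i$ exists since the entries of $y_i$ sum to $1$); then every $x \in (\Phi_N^\omega)^{-1}\{y\}$ satisfies $T_\omega^i x \in U_{j_i}$ for all $i$, so $(\Phi_N^\omega)^{-1}\{y\}$ lies inside the element $\bigcap_{i=0}^{N-1} (T_\omega^i)^{-1} U_{j_i}$ of $\alpha_0^{N-1}\left(\omega\middle\vert\mathcal{E}\right)$. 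Proposition \ref{compatible} then shows $\Phi_N^\omega$ is $\alpha_0^{N-1}\left(\omega\middle\vert\mathcal{E}\right)$-compatible.

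Next I bound the topological dimension of $\Phi_N^\omega(\mathcal{E}_\omega)$. Whenever $T_\omega^i x \notin A_\omega$, all $\phi_j^\omega(T_\omega^i x)$ lie in $\{0,1\}$ and sum to $1$, forcing $\phi^\omega(T_\omega^i x)$ to be a vertex of $\Delta^{l-1}$. The SBP estimate bounds the number of ``bad'' indices $i$ with $T_\omega^i x \in A_\omega$ by less than $\epsilon N$, so
$$\Phi_N^\omega(\mathcal{E}_\omega) \subset Y := \bigcup_{S\subset \{0,\ldots,N-1\},\,|S| \leq \epsilon N} F_S,$$
where $F_S := \{y \in (\Delta^{l-1})^N : y_i \text{ is a vertex of } \Delta^{l-1} \text{ for all } i \notin S\}$. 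Each $F_S$ is a finite union of closed cells homeomorphic to $(\Delta^{l-1})^{|S|}$, hence of dimension $(l-1)|S|$; invoking the standard sum theorem that a finite union of closed subsets of dimension $\leq d$ has topological dimension $\leq d$ gives $\dim Y \leq (l-1)\lfloor \epsilon N \rfloor$. Proposition \ref{lemma3} now yields $\mathcal{D}\big(\alpha_0^{N-1}\left(\omega\middle\vert\mathcal{E}\right)\big) \leq (l-1)\epsilon N$ for all $N > N_0(\omega,\epsilon)$, so dividing by $N$ and letting $N \to \infty$ gives $q(\omega) \leq (l-1)\epsilon$ on a full measure set. Intersecting over a countable sequence $\epsilon_k = 1/k$ yields $q(\omega) = 0$ $\mathbb{P}$-a.s., completing the proof.

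The main obstacle I expect is the dimension estimate for $Y$: the dimension-theoretic sum theorem for closed sets has to be invoked cleanly, and one must verify that each $F_S$ really is a finite union of products of simplices so that its dimension is exactly $(l-1)|S|$. Everything else is a direct adaptation of the Lindenstrauss--Weiss strategy to the bundle setting, with $T_\omega^i$ in place of $T^i$ and the $\omega$-dependent partitions of unity supplied by the SBP proposition; no measurable selection is needed for the compatibility argument because the index $j_i$ is read off the target point, making the whole construction fibrewise.
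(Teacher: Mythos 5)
Your proposal is correct and follows essentially the same route as the paper: the SBP proposition supplies the subordinate partition of unity, the map $x\mapsto\big(\phi^\omega(x),\phi^\omega(T_\omega x),\dots,\phi^\omega(T_\omega^{N-1}x)\big)$ is $\alpha_0^{N-1}\left(\omega\middle\vert\mathcal{E}\right)$-compatible by Proposition \ref{compatible}, its image has dimension at most $(l-1)\epsilon N$ thanks to \eqref{n1}, and Proposition \ref{lemma3} together with dividing by $N$ and letting $\epsilon\to 0$ gives $\text{mdim}(T,\alpha)=0$. The differences are cosmetic: you view the image inside $(\Delta^{l-1})^N$ and bound its dimension by the closed sum theorem applied to the faces $F_S$, where the paper uses finitely many affine subspaces of $\mathbb{R}^{kN}$, and you apply the single partition $\phi^\omega$ at every time step (which is exactly what estimate \eqref{n1} controls), while also making explicit the final Kingman/expectation limit that the paper leaves implicit.
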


\begin{proof}
Take $\alpha$ a finite open cover of $X$, $\#\alpha=k$, $\epsilon>0$. Construct for $\mathbb{P}$-a.a. $\omega$, according to the above proposition, an $\alpha$ subordinate partition of unity which obeys \eqref{n1}.

Define $\Phi^\omega: X \to \mathbb{R}^k$ by \[x \mapsto \big(\phi_1^\omega (x), \phi_2^\omega (x), \dots, \phi_k^\omega (x)\big).\]

Define the map $f_N^\omega: X \to \mathbb{R}^{kN}$ by \[f_N^\omega (x) = \big( \Phi^\omega (x), \Phi^{\vartheta\omega} (T_\omega x), \dots, \Phi^{\vartheta^{N-1}\omega} (T^{N-1}_\omega x) \big).\]

We claim that $f_N^\omega (\mathcal{E}_\omega)$ is a subset of a finite number of $\epsilon k N$ dimensional affine subspaces of $\mathbb{R}^{kN}$.

Indeed, let $\{e^i_j : 1\leq i \leq N, 1\leq j \leq k\}$ be the standard base of $\mathbb{R}^{kN}$. Define for every index set $I\subset \{1,2,\dots,N\}$, $\# I < \epsilon N$, and every $\xi \in \{0,1\}^{kN}$
\[
C(I,\xi) = \text{span} \{ e^i_j : i\in I , 1\leq j \leq k\} + \xi.
\]
Then by \eqref{n1}, \[
f_N^\omega(\mathcal{E}_\omega)\subset \bigcup_{\substack{\# I < \epsilon N \\ \xi \in \{0,1\}^{kN}}} C(I,\xi).
\]
It is easy to see from Proposition \eqref{compatible} that $f_N^\omega$ is $\alpha^{N-1}_0 \left(\omega\middle\vert\mathcal{E}\right)$-compatible and by Proposition \eqref{lemma3} we see that \[
\mathcal{D}\big(\alpha^{N-1}_0 \left(\omega\middle\vert\mathcal{E}\right)\big) < \epsilon k N
\]
for $N>N_0 (\omega,\epsilon)$ and so $T$ has zero mean dimension.
\end{proof}

%\nocite{}
%\nocite{}

%\medskip
%%% The data information below will be filled by AIMS editorial staff
%Received September 2006; revised February 2007.
%
%\medskip

\end{document}